\newtheorem{theorem}{Theorem}
\newtheorem{lemma}[theorem]{Lemma}
\newtheorem{proposition}[theorem]{Proposition}
\newtheorem{corollary}[theorem]{Corollary}
\newtheorem{definition}{Definition}
\newcommand{\C}{\ensuremath{\mathbb{C}}}
\newcommand{\R}{\ensuremath{\mathbb{R}}}
\newcommand{\Z}{\ensuremath{\mathbb{Z}}}
\date{}
\begin{document}

\title{Non-avoided crossings for $n$-body balanced configurations in $\R^3$ near a central configuration}
\author{Alain Chenciner\\
  \\
  \small Observatoire de Paris, IMCCE (UMR 8028), ASD\\
 \small \texttt{chenciner@imcce.fr}\\ \small\&\\
  \small D\'epartement de math\'ematique, Universit\'e Paris VII}

\maketitle

\hangindent=3cm\hangafter =-10 \noindent{\it \`A Jacques Laskar, avec amiti\'e, admiration et  la joie\\ de poursuivre cette longue route ensemble}\medskip

\begin{abstract}

The  {\it balanced configurations} are those $n$-body configurations which admit a relative equilibrium motion in a Euclidean space $E$ of high enough dimension $2p$, (see \cite{AC,C2}). They are characterized by the commutation of two symmetric endomorphisms of the $(n-1)$-dimensional Euclidean space $\mathcal{D}^*$ of {\it codispositions}, the {\it intrinsic inertia endomorphism} $B$ which encodes the shape and the {\it Wintner-Conley endomorphism} $A$ which encodes the forces.
In general, $p$ is the dimension $d$ of the configuration, which is also the rank of $B$. Lowering to $2(d-1)$ the dimension of $E$ occurs when the restriction $A_B$ of $A$ to the (invariant) image of $B$ possesses a double eigenvalue. This condition is well known to be of codimension 2 in the space of all $d\times d$ symmetric endomorphisms (hence the {\it avoided crossings} of physicists). If $d=3$, the subset formed by the endomorphisms $A_B$ of  balanced configurations is of dimension $3$, and endomorphisms with a double eigenvalue should a priori form 1-dimensional families.  But, due to the homogeneity of the equations, this would mean that a (similarity class of) central configuration in $\R^3$ is in general  an isolated point in the set of balanced configurations of the same dimension which admit a relative equilibrium motion in $\R^4$. That this is not the case for the regular tetrahedron with very symmetric choices of masses was known (see \cite{C2}); I prove here that the same holds whatever be the four masses: {\it stemming from the regular tetrahedron, there are always (generically three) non-trivial families of 4-body balanced configurations which admit a relative equilibrium motion in $\R^4$.} For more bodies, the same result follows easily from the commutation of the endomorphisms $A$ and $B$, provided a certain property $(H)$ is satisfied (proposition \ref{d<n-1}); the end of the paper is a detailed study of the case of 4 bodies, with a special attention to the bifurcation locus in the frequency polytope (see \cite{C1,CJ,HZ}) of the regular tetrahedron with generic masses. It is fair to say that the search for a proof was provoked by the result, opposite to my first expectations, of a symbolic computation made at my request by Jacques Laskar (section \ref{Trip}). 
\end{abstract}
\goodbreak

\tableofcontents
\section{Central configurations, balanced configurations and their relative equilibria}

\noindent {\it Central configurations} are the $N$-body configurations which collapse homothetically on their center of mass when released without initial velocity; they are known since Euler and Lagrange to admit periodic homographic motions of all eccentricities and in particular periodic relative equilibrium motions. More generally, {\it balanced configurations} (see\cite{AC,C2}) are the $N$-body configurations which admit a (in general quasi-periodic) relative equilibrium motion in a Euclidean space of high enough dimension. They are characterized by the commutation of two endomorphisms of the {\it codisposition space} ${\cal D}^*$, the first one $A$ which characterizes the attraction forces between the bodies of the configuration, the second one $B$, an intrinsic inertia which encodes the shape of the configuration.
\goodbreak

\subsection{From $B$ to $A$: shapes and forces}\label{BtoA}
An $n$-body configuration $x=(\vec r_1,\cdots, \vec r_n)$ up to translation in the Euclidean space\footnote{The Euclidean structure is identified with an isomorphism $\epsilon:E\to E^*$ from $E$ to its dual.} $(E,\epsilon)$ is a mapping 
$$x:{\cal D}^*\to E,\; (\xi_1,\cdots,\xi_n)\mapsto \sum_{i=1}^n{\xi_i\vec r_i},$$ 
equivalently an element of ${\cal D}\otimes E$, where $${\cal D}:=\R^n/(1,\cdots,1)\R$$ 
is the {\it dispositions} space and ${\cal D}^*=\{(\xi_1,\cdots,\xi_n)| \sum_{i=1}^n\xi_i=0\}$ is its dual.
Fixing the masses naturally endows ${\mathcal D}$ (resp. ${\mathcal D}^*$) with the {\it mass Euclidean structure} $\mu:{\cal D}\to{\cal D}^*$ defined by
$$\mu(x_1,\ldots,x_n)=\bigl(m_1(x_1-x_G),\dots,m_n(x_n-x_G)\bigr),$$
where $x_G=(m_1 x_1+\cdots+m_n
x_n)/\sum{m_i}$ is the center of mass of the $x_i$
(resp. $\mu^{-1}:{\cal D}^*\to{\cal D}$ defined by $\mu^{-1}(\xi_1,\dots,\xi_n)=\bigl(\frac{\xi_1}{m_1},\dots,\frac{\xi_n}{m_n}\bigr)
$).
\smallskip

\noindent The {\it intrinsic inertia form}  (resp. the {\it (dual) inertia form}) of the configuration $x$ is the quadratic form $\beta$ on ${\cal D}^*$ (resp. the quadratic form $b$ on $E^*$), defined by
\begin{equation*} 
\begin{split}
&\beta=x^*\epsilon=x^{tr}\circ\epsilon\circ x\in Hom_s({\mathcal D}^*,{\mathcal D})\equiv Q({\mathcal D}^*)\equiv{\mathcal D}\odot{\mathcal D},\\
\hbox{resp.}\quad &b=(x^{tr})^*\mu=x\circ\mu\circ x^{tr}\in Hom_s(E^*,E)\equiv Q(E^*)\equiv E\odot E.
\end{split}
\end{equation*}
The form $\beta$ defines the configuration up to a rigid motion
(translation and rotation) in $E$. The {\it intrinsic inertia endomorphism} (resp. the {\it (dual) inertia endomorphism}) are respectively the $\mu^{-1}$-symmetric (resp. $\epsilon$-symmetric) endomorphisms
$$B=\mu\circ\beta:{\cal D}^*\to{\cal D}^*\quad(\hbox{resp.}\quad S=b\circ\epsilon:E\to E).$$
Let $$U(x)=\sum_{i<j}m_im_j\Phi(r_{ij}^2),\quad\hbox{where}\quad \Phi(s)={\mathcal G}s^{-\frac{1}{2}},$$
be the potential\footnote{Nothing essential would change for a more general homogeneous $\Phi$.} of the $n$-body configuration $x$. Its invariance under isometries implies the factorization $U(x)=\hat U(\beta)=\tilde U(B)$. The Wintner-Conley endomorphism associated to $x_0$ is the ($\mu^{-1}-symmetric$) endomorphism of ${\mathcal D}^*$ defined by 
$$A=d\hat U(\beta)\circ\mu^{-1}=\mu\circ d\tilde U(B)\circ \mu^{-1}.$$
It is characterized (see \cite{AC,C2}) by the fact that  the equations of motion are $$\ddot x=2x\circ A.$$ 
\medskip

\noindent In some $\mu^{-1}$-orthonormal basis, the two $\mu^{-1}$-symmetric endomorphisms $A$ and $B$ of ${\mathcal D}^*$, are represented by  symmetric matrices. 
Giving any one of these two matrices is equivalent to giving the squared mutual distances $s_{ij}=r_{ij}^2$, that is defining the configuration up to isometry (see \cite{AC}), hence the mapping $F:B\mapsto A$ is well defined outside of the collisions (that is when all the $r_{ij}$ are strictly positive)  and bijective on its image: {\sl the shape of the configuration determines the forces and the forces determine the shape.}  Moreover,
 \begin{lemma}\label{BA} The mapping $F$ is a diffeomorphism.  
\end{lemma}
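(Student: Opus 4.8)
The plan is to recognize $F$ as essentially the gradient map of the potential, and to prove that this gradient map has everywhere nondegenerate derivative by passing to the squared mutual distances, in which variables the potential is separable and strictly convex. Since the text already establishes that $F$ is a smooth bijection onto its image, it suffices to check that its differential is invertible at every non-collision $B$; the inverse function theorem then upgrades the set-theoretic bijection to a diffeomorphism.

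First I would record the linear dictionary between $B$ and the squared distances. Writing $\beta=\mu^{-1}B$ and evaluating on $\xi,\eta\in{\mathcal D}^*$ (so that $\sum_i\xi_i=\sum_i\eta_i=0$), one obtains the classical identity $\beta(\xi,\eta)=-\tfrac12\sum_{i,j}\xi_i\eta_j s_{ij}$ with $s_{ij}=r_{ij}^2$, where the centering terms drop out precisely because $\xi,\eta$ lie in ${\mathcal D}^*$. Thus the correspondence $s\mapsto\beta\mapsto B$ is linear, and by the bijectivity already noted in the text it is a linear isomorphism between the relevant spaces. I will use it as a fixed linear change of variables throughout.

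Next I would exhibit $A$ as a gradient. Since $U(x)=\hat U(\beta)=\tilde U(B)$ and $A=\mu\circ d\tilde U(B)\circ\mu^{-1}$, the map $F$ equals the differential $B\mapsto d\tilde U(B)$ followed by the fixed linear isomorphism $\Psi=\mu\circ(\,\cdot\,)\circ\mu^{-1}$. Consequently $dF(B)=\Psi\circ d^2\tilde U(B)$, so $dF(B)$ is invertible if and only if the Hessian $d^2\tilde U(B)$ is nondegenerate. Transporting this Hessian through the linear dictionary above reduces everything to computing the Hessian of $U$ in the variables $s_{ij}$.

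In the squared-distance coordinates the potential is separable: $U=\sum_{i<j}m_im_j\,\Phi(s_{ij})$, so its Hessian is diagonal with entries $m_im_j\,\Phi''(s_{ij})$, and with $\Phi(s)={\mathcal G}s^{-1/2}$ one has $\Phi''(s)=\tfrac34{\mathcal G}\,s^{-5/2}>0$ at every non-collision. Hence the Hessian is positive definite, in particular nondegenerate, in the $s$-variables; pulling it back by the linear isomorphism preserves nondegeneracy, so $d^2\tilde U(B)$, and therefore $dF(B)$, are nondegenerate for all non-collision $B$. (The same strict convexity independently re-proves the injectivity of $F$.) Combined with the fact that $F$ is a smooth bijection onto its image, this shows that $F$ is a local diffeomorphism everywhere and globally bijective, hence a diffeomorphism. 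The one point to handle with care is the bookkeeping of the identifications $Hom_s\equiv Q\equiv\odot$, so that $A$ is correctly seen as the gradient of $U$ and $dF$ as its Hessian; once those identifications are fixed, the convexity computation is immediate.
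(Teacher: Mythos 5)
Your proof is correct and, at bottom, is the paper's argument in a different wrapper: the paper simply observes that in squared-mutual-distance coordinates $F$ becomes the coordinate-wise map $(s_{ij})\mapsto(\varphi(s_{ij}))$ with $\varphi=\Phi'$ smooth and strictly monotone, and your diagonal Hessian $\mathrm{diag}\bigl(m_im_j\Phi''(s_{ij})\bigr)$ is exactly the derivative of that map. The gradient/strict-convexity packaging is a pleasant bonus --- it re-proves injectivity and works for any $\Phi$ with $\Phi''$ of constant sign without ever writing $A$ in coordinates --- but the essential step, reduction to the separable $s$-variables via the linear dictionary $s\leftrightarrow B$, is the same.
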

Indeed, in well chosen bases of the space of symmetric matrices, the coordinates of $B$ are the squared mutual distances $s_{ij}=r_{ij}^2,\; 1\le i<j<n$ while the coordinates of $A$ are the $\varphi(s_{ij}):=\Phi'(s_{ij})=-\frac{1}{2}\mathcal{G}s_{ij}^{-\frac{3}{2}}$.

\subsection{Balanced configurations and their relative equilibria}\label{Balcon}
\noindent A {\it rigid motion} is a solution of the equations of motion along which the mutual distances $r_{ij}$ remain constant. Such a motion is necessarily a {\it relative equilibrium}, that is an equilibrium of the equations after reduction of their natural symmetry under isometries (\cite{AC} Proposition 2.5). Moreover, relative equilibria are of the following form (\cite{AC} Propositions 2.8 and 2.9):
$$x(t)=e^{\Omega t}x,$$
where $\Omega$ is a constant $\epsilon$-antisymmetric\footnote{i.e. such that $\varpi=\epsilon\circ\Omega=\Omega^{tr}\circ\epsilon=\varpi^{tr}\in\Lambda^2E^*$.} isomorphism\footnote{It is assumed that $E$ is the space effectively visited by the motion.} of the (necessarily even dimensional) Euclidean space $(E,\epsilon)$ and $x$ belongs to the very special class of {\it balanced configurations} which we now characterize: 
it follows from the equations of motion that
$$\Omega^2\circ x=2x\circ A.$$

\noindent {\it From now on, we shall identify ${\mathcal D}$ and $E$ with their respective duals ${\mathcal D}^*$ and $E^*$ using their Euclidean structures $\mu$ and $\epsilon$. Moreover, we shall choose a $\mu^{-1}$-orthonormal basis of ${\cal D}^*$ and an $\epsilon$-orthonormal basis of $E$
and represent endomorphisms of ${\cal D}^*$ and endomorphisms of $E$ by matrices in such bases.}
\smallskip

\noindent If $X$ is the $2p\times (n-1)$ matrix representing $x_0$ in these bases, we have:
$$S=XX^{tr}, \quad B=X^{tr} X, \quad \Omega^2X=2XA.$$
From the symmetry of the matrices $2BA=X^{tr}\Omega^2X$ and $\Omega^2S=2XAX^{tr}$, we deduce the vanishing of the following commutators:
$$[A,B]=0,\quad [\Omega^2,S]=0.$$
\goodbreak

\noindent Of course, as $\Omega^2$ commutes with $e^{\Omega t}$, one can replace $S$ by the inertia endomorphism $S(t)=X(t)X(t)^{tr}$.
The first equation, independent of the dimension of the ambient space, was shown in \cite{AC} to define the balanced configurations. 
The following diagram summarizes these relations (on the left, the ``side of the bodies", on the right the ``side of ambient space"):
\begin{center}
\includegraphics[scale=0.45]{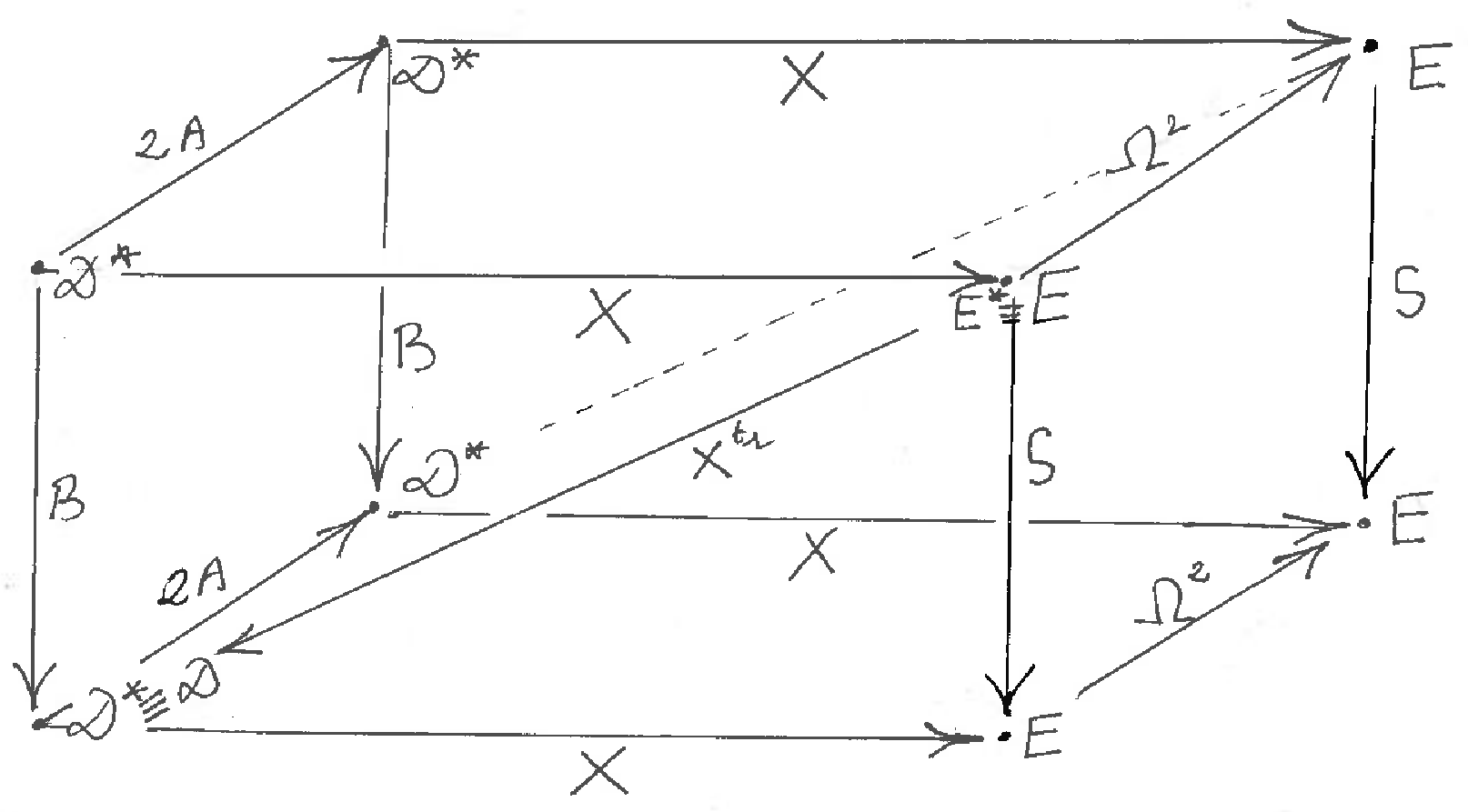} 
\end{center}
\noindent In terms of mutual distances, the equations of balanced configurations are (\cite{AC})
$$P_{ijk}=-\frac{1}{2}\nabla_{ijk}+\frac{1}{2}\sum_{l\not=i,j,k}Y_{ijk}^l=0,\; i<j<k,$$
with (recalling that $\varphi(s)=-\frac{1}{2}s^{-\frac{3}{2}}$)
\begin{equation*}
\begin{split}
\nabla_{ijk}&=\begin{vmatrix}
1&1&1\\
m_i(r_{jk}^2-r_{ki}^2-r_{ij}^2)&m_j(r_{ki}^2-r_{ij}^2-r_{jk}^2)&m_k(r_{ij}^2-r_{jk}^2-r_{ki}^2)\\
\varphi(r_{jk}^{2})&\varphi(r_{ki}^{2})&\varphi(r_{ij}^{2})
\end{vmatrix},\\
Y_{ijk}^l&=
m_l\begin{vmatrix}
1&1&1\\
r_{jk}^2+r_{il}^2&r_{ki}^2+r_{jl}^2&r_{ij}^2+r_{kl}^2\\
\varphi(r_{il}^{2})&\varphi(r_{jl}^{2})&\varphi(r_{kl}^{2})
\end{vmatrix}.
\end{split}
\end{equation*}
As soon as the number $n$ of bodies is greater than three, these equations are not independent:
they are obtained by taking the exterior product by $(1,1,\cdots,1)$ of the equation $[A,B]=0$, that is by embedding the space $\wedge^2\mathcal D$, whose dimension is ${n\choose2}=\frac{(n-1)(n-2)}{2}$, into the space $\wedge^3\R^n$, whose dimension is ${n\choose3}=\frac{n(n-1)(n-2)}{6}$.
\smallskip

\noindent Finally, let us recall (see \cite{AC}) that the balanced (resp. central) configurations are those configurations whose intinsic inertia endomorphism $B$ is a critical point of the potential function $\tilde U$ restricted to its {\it isospectral submanifold}, consisting in all the $\mu^{-1}$-symmetric endomorphisms with the same spectrum (resp. restricted to the submanifold of symmetric $\mu^{-1}$-symmetric endomorphisms which have the same trace $I$).

\subsection{The invariant subspaces of $\Omega$ and the minimal dimension of $E$}\label{dimMin}
Each $n$-body balanced configuration $x$ admits relative equilibrium motions in a Euclidean space $E$ of high enough even dimension (twice the rank of $x$, that is twice the rank of $B$, suffices, see \cite{AC} Proposition 2.8). The smallest possible dimension allowing such a motion depends on the  multiplicities of the eigenvalues of the Wintner-Conley endomorphism $A$ (\cite{AC} Remark 2.11). In order to explain this, we study the invariant subspaces of the instantaneous rotation matrix $\Omega$. 
\vskip0.1cm
 
\noindent Fixing the balanced configuration $x$, we choose a $\mu^{-1}$-orthonormal basis of ${\cal D}^*$ and an $\epsilon$-orthonormal basis  of $E$ such that the matrices $B$ and $S$ representing the  inertia endomorphisms of $x$ be diagonal:

\begin{equation*}
\left\{
\begin{split}
B&=diag(b_1,\cdots,b_{n-1}),\\
S&=diag(\sigma_1,\cdots,\sigma_{2p}).
\end{split}\right.
\end{equation*}
From the above mentioned  commutations, it follows that such bases can be chosen so as to satisfy also
\begin{equation*}
\left\{
\begin{split}
2A&=diag(-\lambda_1,\cdots,-\lambda_{n-1}),\\
\Omega^2&=diag(-\omega^2_1,\cdots,-\omega^2_{2p}).
\end{split}
\right.
\end{equation*}
Note that all the eigenvalues of $2A$ are strictly negative because the Newton force is attractive.
The non-zero eigenvalues of $S=XX^{tr}$ and $B=X^{tr} X$ are the same; their number is the dimension of the configuration, that is the rank $d=\hbox{dim\,Im}X$ of $X$.
Moreover, as $\hbox{Im}\,X=\hbox{Im}\,S$ is generated by vectors of the basis of $E$,  we can suppose, after a possible reordering of the bases of ${\cal D}^*$ and $E$,  that $X$ is of the form
$
X=\begin{pmatrix}
V&W\\
0&0
\end{pmatrix},
$
where the $d\times d$ upper left block $V$ is invertible.
Moreover, as $B=X^{tr} X$ is diagonal, $W=0$, that is 
$$
X=\begin{pmatrix}
V&0\\
0&0
\end{pmatrix},\quad \hbox{with}\quad V:\hbox{Im}B\to\hbox{Im}S=\hbox{Im}X=\hbox{Im}x\quad \hbox{an isomorphism},
$$
and $\;VV^{tr}=diag(\sigma_1,\cdots,\sigma_{d}),\quad V^{tr}V=diag(b_1,\cdots,b_{d})$, hence
$$diag(\sigma_1,\cdots,\sigma_{d})V=Vdiag(b_1,\cdots,b_{d}).$$
Finally, the equation $\Omega^2X=2XA$, is equivalent to
$$diag(-\omega^2_1,\cdots,-\omega^2_{d})V=Vdiag(-\lambda_1,\cdots,-\lambda_{d}),$$ 
hence, after possibly replacing $V$ by its product $VP$ with a permutation matrix, which amounts to permuting the first $d$ vectors of the basis of $\mathcal{D}^*$, which generate $\hbox{Im}B$, one can suppose that 
$$\omega_k^2=\lambda_k,\; \hbox{for}\; k=1,\cdots, d,$$
while the $b_i,\; i=1,\cdots,d,$ are a permutation of the $\sigma_i,\; i=1,\cdots,d$.
\smallskip

\noindent This shows in particular, and this comes as no surprise,  that it is only the restriction $A_B$ of $A$ to the image of $B$ which plays a role. Recall that a necessary and sufficient condition for $x_0$ to be a central configuration is that the restriction of $A$ to this subspace be proportional to the Identity, that is $\lambda_1=\lambda_2=\cdots=\lambda_d$.
\goodbreak

\noindent{\bf Notation.} {\it Bases of ${\cal D}^*$ and $E$ with the properties above will be denoted respectively
$\{u_1,\cdots,u_{n-1}\}$ and $\{\rho_1,\cdots,\rho_{2p}\}$. In particular, $u_1,\cdots,u_d$ generate $\hbox{Im}B$ and $\rho_1,\cdots,\rho_d$ generate $\hbox{Im}\,x=\hbox{Im}\,S$.}
\medskip

\noindent The real invariant planes of  $\Omega$ can be generated either by the couple formed by a vector $\rho_k\in\hbox{Im}\,x$ and a vector $\rho_{d+l}$ in the orthogonal $(\hbox{Im}\,x)^\perp$, or by the couple formed by two vectors  $\rho_k,\rho_l\in\hbox{Im}\,x$, both associated with the same eigenvalue $\lambda_k=\lambda_l$ of $A$. Similar descriptions hold for higher dimensional invariant subspaces. It follows that,
writing $\tilde\omega_1^2,\cdots,\tilde\omega_r^2$ the distinct values taken by the $\omega_k^2=\lambda_k, k=1,\cdots,d$, a space $E$ of minimal dimension where a relative equilibrium motion with such a configuration may take place decomposes into a direct sum $E_1\oplus\cdots\oplus E_r$ of eigenspaces of $\Omega$, which are complex\footnote{more precisely ``hermitian", that is such that each $J_l$ is an isometry.} spaces $(E_l,J_l)$, and the motion is quasi-periodic of the form $$x(t)=\bigl(x_1(t),\cdots,x_r(t)\bigr),\quad \hbox{with}\quad x_l(t)=e^{\tilde\omega_lt}{x}_l,\; l=1,\cdots,r .$$
When $r=1$, that is when $\lambda_1=\cdots=\lambda_d=\tilde\omega^2$, which means that the configuration $x$ is central, the motion becomes periodic, of the form $x(t)=e^{\tilde\omega Jt}x$, with $J$ a complex (hermitian) structure on $E$. 
\smallskip

\noindent Let us denote by $$\varpi=\epsilon\circ\Omega\in\wedge^2E^*$$ 
the instantaneous rotation bivector.  
The main possibilities of invariant spaces for $\Omega$ can be read on the inverse image of $\varpi$ by $x$:
$$x^*\varpi=x^{tr}\circ\varpi\circ x=-\rho,$$
which is nothing (up to sign) but the antisymmetric part of $x^{tr}\circ\epsilon\circ y$:
$$\rho=\frac{1}{2}\left(-x^{tr}\circ\epsilon\circ y+y^{tr}\circ\epsilon\circ x\right),$$
which was introduced in \cite{La} in the case of 3 bodies and in \cite{AC} in the general case of $n$ bodies.   In term of matrices,
the endomorphism $R=\mu\circ\rho$ of ${\mathcal D}^*$ is represented by
 $$R=\frac{1}{2}\left(-X^{tr}Y+Y^{tr}X\right)=-X^{tr}\Omega X.$$
 Indeed, only contribute to $\rho$ the invariant subspaces of $\Omega$ entirely contained in $\hbox{Im}\,x_0$. In particular, $\rho$ vanishes in the generic case where the eigenvalues $\lambda_i$ of $A|_{\hbox{\small Im} B}$ are all distinct, that is when the motion takes place in a space of dimension twice the one of $\hbox{Im}\,x$  (compare \cite{AC} Remark 2.11).  
\smallskip

\subsection{A criterion for degeneracy}\label{deg} The equalities $\Omega^2X=2XA$ and $R=-X^{tr}\Omega X$ imply the commutation of $R$ with the Wintner-Conley matrix: $[A,R]=0$ (compare to  the equations of relative equilibrium in \cite{AC}). This looks quite natural in view of the following lemma, where ``degenerate" means ``possess a multiple eigenvalue":
\begin{lemma}[Lax \cite{L}]\label{Lax} A real symmetric matrix $A$ is degenerate if and only if it commutes with some nonzero real antisymmetric matrix $R$.
\end{lemma}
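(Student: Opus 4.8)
The plan is to work in an orthonormal basis that diagonalizes $A$, translating both the commutation relation and the (anti)symmetry of $R$ into conditions on matrix entries. Since $A$ is real symmetric such a basis exists, and conjugating by the orthogonal change-of-basis matrix preserves both the symmetry of $A$ and the antisymmetry of $R$ (for orthogonal $P$, $(P^{tr}RP)^{tr}=P^{tr}R^{tr}P=-P^{tr}RP$), so no generality is lost in assuming $A=diag(\lambda_1,\dots,\lambda_n)$.

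For the ``if'' direction, suppose $[A,R]=0$ with $R$ antisymmetric and nonzero. In the diagonalizing basis the $(i,j)$ entry of the commutator is $(\lambda_i-\lambda_j)R_{ij}$, so the vanishing of $[A,R]$ forces $R_{ij}=0$ whenever $\lambda_i\neq\lambda_j$, while antisymmetry gives $R_{ii}=0$. If all the eigenvalues were distinct, every entry of $R$ would then vanish, contradicting $R\neq 0$; hence some eigenvalue is repeated and $A$ is degenerate.

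For the ``only if'' direction, assume $A$ has an eigenvalue $\lambda$ of multiplicity at least $2$, and let $u,v$ be two orthonormal eigenvectors spanning a plane inside the corresponding eigenspace. Set $R=u\,v^{tr}-v\,u^{tr}$, which is manifestly nonzero and antisymmetric. Because $A$ is symmetric and acts as $\lambda\,\mathrm{Id}$ on this eigenspace, one checks directly that $AR=\lambda R=RA$, so $[A,R]=0$. Equivalently, in the diagonalizing basis one may simply take $R$ to be the elementary antisymmetric generator $E_{ij}-E_{ji}$ for a pair of indices $i,j$ with $\lambda_i=\lambda_j$; it commutes with $A$ because the restriction of $A$ to the repeated-eigenvalue block is scalar.

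There is no serious obstacle here: the content is the elementary observation that conjugation by orthogonal matrices respects the two symmetry classes, so the whole question reduces to the diagonal case, where it becomes the entry-by-entry bookkeeping above. The one point requiring a word of care is the reduction itself — that passing to an eigenbasis of $A$ does not destroy the antisymmetry of $R$ — which is exactly where orthonormality of the basis, rather than a merely diagonalizing one, is used.
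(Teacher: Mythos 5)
Your proof is correct and is exactly the argument the paper has in mind: it states only that ``the proof is obvious in an orthonormal basis where $A$ is diagonal,'' and your entry-by-entry computation $([A,R])_{ij}=(\lambda_i-\lambda_j)R_{ij}$ together with the elementary generator $E_{ij}-E_{ji}$ for a repeated pair is precisely the filling-in of that remark. No discrepancy to report.
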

The proof is obvious in an orthonormal basis where $A$ is diagonal.
\goodbreak

\noindent Of course, the existence of a double eigenvalue is also equivalent to the vanishing of the {\it discriminant}, that is the resultant of the characteristic polynomial and its derivative. But already for $3\times 3$ symmetric matrices, the discriminant is a quite long homogeneous degree six polynomial in the 6 coefficients of the matrix, which can be written as a sum of 5 squares see \cite{D}); 
and  it is not even clear on this expression that, as was already known to Von Neuman and Wigner, its regular part (corresponding to the existence of exactly one pair of equal eigenvalues) defines a codimension 2 submanifold (this is the classical phenomenon of {\it avoided crossings} in quantum mechanics, the obvious geometric proof of which can be found in \cite{A}).  We shall use this criterion when studying the degeneracies of $B$ and $A$ in the case of 4 bodies (see sections  \ref{EquDeg} and \ref{Trip}).
.

\subsection{Central configurations of general type}\label{NonDeg} 

\begin{definition}\label{GenType} The balanced configuration $x_0$ is said to be {\rm of general type} if

1) the non-zero eigenvalues of its intrinsic inertia endomorphism $B_0$ are all distinct;

2) $B_0$ is a non-degenerate critical point of the restriction of the potential $\tilde U$ to its isospectral submanifold. 
\end{definition}
\begin{lemma}\label{param}
The intrinsic inertia endomorphisms $B$ of balanced configurations $x$ close enough to a balanced configuration $x_0$ of general type and of the same rank $d$, form a $d$-dimensional submanifold of the space of $(n-1)\times(n-1)$-symmetric endomorphisms of rank $d$, which intersects transversally the isospectral submanifold of $B_0$. 
\end{lemma}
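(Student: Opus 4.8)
The plan is to use the intrinsic characterization recalled at the end of \S\ref{Balcon}: a configuration is balanced if and only if its intrinsic inertia endomorphism $B$ is a critical point of the restriction of $\tilde U$ to the isospectral submanifold through $B$, and (by the discussion in \S\ref{BtoA}) such a configuration is determined, up to isometry, by $B$. Write $\mathcal{S}_d$ for the manifold of $(n-1)\times(n-1)$ symmetric endomorphisms of rank exactly $d$; since $B_0$ is collision-free and $F$ is a diffeomorphism (Lemma \ref{BA}), $\tilde U$ is smooth on a neighborhood of $B_0$ in $\mathcal{S}_d$. The first step is to describe the isospectral submanifolds near $B_0$ as the fibres of a submersion. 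Because by hypothesis (1) the non-zero eigenvalues $b_1,\dots,b_d$ of $B_0$ are simple (the eigenvalue $0$ having constant multiplicity $n-1-d$ on $\mathcal{S}_d$), the map $\mathrm{spec}\colon\mathcal{S}_d\to\R^d$ sending $B$ to its ordered tuple of non-zero eigenvalues is smooth near $B_0$ and is a submersion: moving the diagonal entries of a diagonal representative moves the eigenvalues independently, so $d\,\mathrm{spec}$ is onto. Its fibres are exactly the isospectral submanifolds, which therefore foliate a neighborhood of $B_0$ in $\mathcal{S}_d$ with codimension $d$; write $M_\sigma=\mathrm{spec}^{-1}(\sigma)$ for the leaf of spectrum $\sigma$ and $\sigma_0$ for the spectrum of $B_0$.

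Next I would trivialize this foliation by smooth adapted coordinates $(\sigma,\theta)$ on a neighborhood of $B_0$ in $\mathcal{S}_d$, with $\sigma\in\R^d$ the transverse (spectral) variable and $\theta$ running in the leaf, $B_0\leftrightarrow(\sigma_0,\theta_0)$. In these coordinates the balanced condition reads $\partial_\theta\tilde U(\sigma,\theta)=0$, and hypothesis (2) says precisely that the leafwise Hessian $\partial^2_{\theta\theta}\tilde U(\sigma_0,\theta_0)$ is non-degenerate. The implicit function theorem then produces a unique smooth solution $\theta=\theta(\sigma)$ of $\partial_\theta\tilde U(\sigma,\theta)=0$ near $(\sigma_0,\theta_0)$, with $\theta(\sigma_0)=\theta_0$. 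Since the non-degenerate critical point on each nearby leaf is the unique one close to $\theta_0$, the set of balanced $B$ of rank $d$ close to $B_0$ coincides with the graph $\Gamma=\{(\sigma,\theta(\sigma))\}$, an embedded $d$-dimensional submanifold of $\mathcal{S}_d$.

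It remains to verify transversality of $\Gamma$ and $M_{\sigma_0}$ at $B_0$. In the coordinates $(\sigma,\theta)$ one has $T_{B_0}M_{\sigma_0}=\{(0,\dot\theta)\}$ and $T_{B_0}\Gamma=\{(\dot\sigma,D\theta(\sigma_0)\dot\sigma)\}$. These two subspaces meet only at the origin, and their dimensions, $\big(\binom{d}{2}+d(n-1-d)\big)$ and $d$, add up to $\dim\mathcal{S}_d=d(n-1)-\binom{d}{2}$; hence $T_{B_0}\Gamma\oplus T_{B_0}M_{\sigma_0}=T_{B_0}\mathcal{S}_d$, which is exactly transversality within $\mathcal{S}_d$ (the intersection reducing to the single point $B_0$). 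This proves the lemma.

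The step I expect to require the most care is the first one, namely establishing that the isospectral submanifolds genuinely form a smooth codimension-$d$ foliation of $\mathcal{S}_d$ near $B_0$ with the spectrum as a smooth transverse coordinate: this is where hypothesis (1) is indispensable, since simplicity of the non-zero eigenvalues guarantees that the eigenvalues and the associated spectral projections depend smoothly on $B$, a property that fails precisely at matrices with a multiple eigenvalue. Once this foliated structure and the smoothness of $\tilde U$ near the collision-free $B_0$ are in place, hypothesis (2) feeds directly into the implicit function theorem and the remainder is the routine tangent-space bookkeeping above.
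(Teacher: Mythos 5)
Your proof is correct and follows essentially the same route as the paper: the paper's (much terser) argument likewise labels the isospectral leaves by the distinct non-zero eigenvalues to see they form a codimension-$d$ foliation, and then invokes the isolation and differentiable stability of the non-degenerate leafwise critical point to conclude that the spectrum gives coordinates on the set of balanced $B$. Your version merely makes explicit the submersion property of the spectrum map, the implicit-function-theorem step, and the tangent-space dimension count, all of which check out.
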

\begin{proof}
Isospectral submanifolds of endomorphisms $B$ close to $B_0$ and with the same rank $d$ all have the same codimension $d$ as they may be labelled by their non-zero (and distinct) eigenvalues. As a non-degenerate critical point is isolated and differentiably stable under perturbations, the lemma follows : the non-zero eigenvalues of $B$ define coordinates in the set formed by the inertia matrices $B$ of balanced configurations close to $B_0$. 
\end{proof}





\subsubsection{The case of maximal rank ($d=n-1$)}
In this case, one can replace $B$ by $A$ in the parametrization given by lemma \ref{param} and this implies that, for generic balanced configurations, the degeneracy of $A$ becomes a codimension 1 property (hence the "non avoided crossings"):
\begin{proposition}\label{PropGenType} If $x_0$ is an $n$-body central configuration of general type of rank $n-1$, the intrinsic inertia endomorphisms $B$ of the balanced configurations $x$ close enough to $x_0$ form an $(n-1)$-dimensional submanifold, which can be parametrized either by their eigenvalues $(b_1,\cdots, b_{n-1})$ or by the eigenvalues [up to the factor -1/2] $(\lambda_1,\cdots,\lambda_{n-1})$ of their Wintner-Conley endomorphisms $A$.
\end{proposition}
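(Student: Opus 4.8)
The plan is to reduce the statement to the invertibility of a single Jacobian and then read off that invertibility from Lemma~\ref{BA} together with the defining relation of balanced configurations. By Lemma~\ref{param} (with $d=n-1$) the inertia endomorphisms $B$ of the balanced configurations near $x_0$ already form an $(n-1)$-dimensional submanifold $\mathcal B$ on which the eigenvalues $(b_1,\dots,b_{n-1})$ are coordinates, so the first parametrization is granted. Everything therefore comes down to showing that the spectrum map $\Lambda_A:\mathcal B\to\R^{n-1}$, $B\mapsto(\lambda_1,\dots,\lambda_{n-1})$ (the eigenvalues of $-2A=-2F(B)$), is a local diffeomorphism at $B_0$, i.e. that $\det\!\left(\partial\lambda_i/\partial b_j\right)\neq0$. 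The one external input I shall use is that, $F$ being a diffeomorphism, its differential $DF_{B_0}$ is an isomorphism of the space of symmetric endomorphisms; concretely, in the chosen orthonormal basis $F$ is the gradient map of $\tilde U$, so $DF_{B_0}=\mathrm{Hess}_{B_0}\tilde U$, whose invertibility is exactly Lemma~\ref{BA}.

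First I would compute $T_{B_0}\mathcal B$. Near $B_0$ the balanced configurations are cut out by $[F(B),B]=0$, so linearizing at $B_0$ gives $[DF_{B_0}\dot B,B_0]+[A_0,\dot B]=0$. Here the central-configuration hypothesis enters decisively: since $x_0$ has maximal rank $n-1$, the image of $B_0$ is all of $\mathcal D^*$ and the characterization recalled in section~\ref{dimMin} forces $A_0=F(B_0)=cI$ for some $c<0$; consequently $[A_0,\dot B]=0$ and the linearized equation reduces to $[DF_{B_0}\dot B,B_0]=0$. Because $B_0$ has distinct eigenvalues (the first condition of Definition~\ref{GenType}), its commutant is exactly the subspace $\Delta$ of endomorphisms diagonal in the eigenbasis of $B_0$, so that $T_{B_0}\mathcal B\subseteq DF_{B_0}^{-1}(\Delta)$; since both sides have dimension $n-1$ (the right one because $DF_{B_0}$ is an isomorphism and $\dim\Delta=n-1$, the left one by Lemma~\ref{param}), they coincide:
$$T_{B_0}\mathcal B=\{\dot B:\,DF_{B_0}\dot B\in\Delta\}=DF_{B_0}^{-1}(\Delta).$$

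Next I would differentiate the spectrum map. The eigenvalues of $A$ are a priori only continuous at $B_0$, since $A_0=cI$ is maximally degenerate; the remedy is to follow them through the eigenvectors of $B$ rather than of $A$. As $B_0$ has simple spectrum, it admits smooth unit eigenvectors $v_i(B)$ near $B_0$, and on $\mathcal B$, where $[A,B]=0$, each $v_i$ is also an eigenvector of $A$, so $\lambda_i=-2\langle F(B)v_i(B),v_i(B)\rangle$ is a smooth function on $\mathcal B$. Differentiating and using $\langle\dot v_i,v_i\rangle=0$ together with $A_0v_i=c\,v_i$, the eigenvector-variation term drops out and one obtains $D\Lambda_A(\dot B)=-2\,\mathrm{diag}_{B_0}\!\left(DF_{B_0}\dot B\right)$, the diagonal being taken in the eigenbasis of $B_0$. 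For $\dot B\in T_{B_0}\mathcal B$ the endomorphism $DF_{B_0}\dot B$ already lies in $\Delta$, so $D\Lambda_A(\dot B)=-2\,DF_{B_0}\dot B$, and hence $D\Lambda_A$ is, up to the factor $-2$, the restriction $DF_{B_0}\colon T_{B_0}\mathcal B\to\Delta$, which is an isomorphism by the very description of $T_{B_0}\mathcal B$. The inverse function theorem then makes $\Lambda_A$ a local diffeomorphism, so the $(\lambda_i)$ are indeed coordinates on $\mathcal B$ near $B_0$.

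The one delicate point — and what I expect to be the crux — is precisely the degeneracy $A_0=cI$: it is what makes the eigenvalues of $A$ nonsmooth as functions of $A$, and it is the reason one cannot naively parametrize by the spectrum of $A$ at the centre. The resolution is that the \emph{same} equality $A_0=cI$ which creates the singularity also (i) makes the cross-term $[A_0,\dot B]$ vanish, pinning down $T_{B_0}\mathcal B=DF_{B_0}^{-1}(\Delta)$, and (ii) kills the eigenvector-variation term in $D\Lambda_A$; once the $\lambda_i$ are tracked through the simple spectrum of $B_0$, the computation is forced. The only verification I would still carry out carefully is that the identification $\lambda_i=-2\langle F(B)v_i,v_i\rangle$ with an actual eigenvalue of $A$ holds on all of $\mathcal B$, and not merely to first order — but this is immediate from the commutation $[A,B]=0$, which is valid throughout $\mathcal B$.
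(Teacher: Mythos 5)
Your proof is correct and follows essentially the same route as the paper: both arguments rest on Lemma~\ref{BA} ($B\mapsto A(B)$ being a diffeomorphism) and on the observation that $A_0$ proportional to the identity kills the term coming from the variation of the diagonalizing frame, so that the derivative of the spectrum map reduces to $dA(B_0)$ restricted to the tangent space of the balanced-configuration manifold. Your explicit identification $T_{B_0}\mathcal B=DF_{B_0}^{-1}(\Delta)$ via the linearized commutator equation is a slightly more detailed packaging of what the paper leaves implicit (that $d\mathcal A(B_0)$ maps onto the diagonal matrices), but it is the same argument.
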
 

\begin{proof}The first part is nothing but lemma \ref{param}.  
For the second part, we represent symmetric endomorphisms of ${\mathcal D}^*$ by symmetric matrices in the unique (up to permutation) 
$\mu^{-1}$-orthonormal basis of ${\mathcal D}^*$ which diagonalizes $B_0$, and hence also diagonalizes $A_0=A(B_0)$; given any balanced configuration $B$ close enough to $B_0$, there exists a unique rotation $R=R(B)\in O({\mathcal D}^*)$ such that $RBR^{-1}$ (and hence also 
$RAR^{-1}$) is diagonal. The conclusion follows from the 
\begin{lemma}
Under the hypotheses of Proposition \ref{PropGenType}, the map ${\mathcal A}$ from balanced configurations $B$ close enough to $B_0$ to diagonal matrices defined by
$$\mathcal A(B)= {R(B)}A(B)R(B)^{-1}=-\frac{1}{2}diag(\lambda_1,\cdots,\lambda_{n-1}),$$ 
 is a diffeomorphism onto its image.
\end{lemma}
\begin{proof}
The derivative of ${\mathcal A}$ at the central configuration $B_0$ is
$$d{\mathcal A}(B_0)\Delta B=dA(B_0)\Delta B+\bigl[dR(B_0)\Delta B,A_0\bigr]=dA(B_0)\Delta B,$$
because $A_0$ is proportional to the Identity. The conclusion follows because the map $B\mapsto A(B)$ is a diffeomorphism.
\end{proof}
\end{proof}
\goodbreak

\subsubsection{The general case}\label{general}
If the rank $d$ of the configuration is strictly smaller than $n-1$, it is natural to look at the the balanced configurations $x$ close enough to $x_0$ {\it with the same rank $d$}. For such configurations,  the analogue of Proposition \ref{PropGenType} now requires that the following condition (automatic if the rank of $x_0$ is $n-1$) be satisfied by $x_0$:
{\begin{quotation}
\noindent {\bf (H)}\hskip0.15cm {\it The balanced configuration $x_0$ (or its inertia $B_0$) is said to satisfy condition (H) if  the mapping $B\mapsto A|_{Im B},$
which to the intrinsic inertia of a balanced configuration $x$ of the same rank $d$ associates the restriction to its image of the Wintner-Conley endomorphism, is a local diffeomorphism at $B_0$.}
\end{quotation}}
\begin{proposition}\label{d<n-1}
If $x_0$ is an $n$-body central configuration of general type of rank $d$ satisfying {\bf (H)}, the intrinsic inertia endomorphisms $B$ of the balanced configurations $x$ close enough to $x_0$ and of the same rank $d$ form a $d$-dimensional submanifold, which can be parametrized either by their non-zero eigenvalues $(b_1,\cdots, b_{d})$ or by the eigenvalues [up to the factor -1/2] $(\lambda_1,\cdots,\lambda_{d})$ of the restriction $A|_{Im B}$ to their image of their Wintner-Conley endomorphism. In particular, for generic balanced configurations satisfying $(H)$, the degeneracy of $A$ is a codimension 1 property:
\end{proposition}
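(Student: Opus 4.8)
The proof of Proposition \ref{d<n-1} should mirror that of Proposition \ref{PropGenType}, the essential content being transferred entirely into the hypothesis \textbf{(H)}. The plan is as follows. First, by Lemma \ref{param}, the intrinsic inertia endomorphisms $B$ of balanced configurations of rank $d$ near $B_0$ already form a $d$-dimensional submanifold $\mathcal{M}$, whose non-zero eigenvalues $(b_1,\cdots,b_d)$ serve as local coordinates; this gives the first parametrization at once and requires no further argument. The real work is to show that the map $B\mapsto(\lambda_1,\cdots,\lambda_d)$ sending $B\in\mathcal{M}$ to the spectrum of the restriction $A|_{\mathrm{Im}\,B}$ is also a local diffeomorphism onto its image, so that the $\lambda_i$ are an equally good system of coordinates.

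The key step is to factor this spectrum map through the map of hypothesis \textbf{(H)}. I would write $\Psi:\mathcal{M}\to\mathrm{Hom}_s(\mathrm{Im}\,B)$ for the local diffeomorphism $B\mapsto A|_{\mathrm{Im}\,B}$ guaranteed by \textbf{(H)}, and then compose with the spectrum map $\mathrm{spec}:A_B\mapsto(\lambda_1,\cdots,\lambda_d)$. Since $x_0$ is central, its restriction $A|_{\mathrm{Im}\,B_0}$ is proportional to the identity, so the fibre of $\mathrm{spec}$ over this point is (locally) the orbit under conjugation by $O(\mathrm{Im}\,B_0)$; the same computation as in the lemma inside Proposition \ref{PropGenType} shows that, after diagonalizing by a rotation $R(B)$ depending smoothly on $B$, the derivative at $B_0$ of the composite map $B\mapsto R(B)\,\Psi(B)\,R(B)^{-1}$ kills the commutator term $[dR(B_0)\Delta B,\,A_0|_{\mathrm{Im}\,B_0}]$ precisely because $A_0|_{\mathrm{Im}\,B_0}$ is scalar. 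Hence the derivative of the composite equals $d\Psi(B_0)$ followed by the linear projection onto the diagonal, and \textbf{(H)} makes $d\Psi(B_0)$ an isomorphism onto the full symmetric space; its diagonal part is then an isomorphism onto the diagonal matrices, giving the diffeomorphism onto the image.

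I expect the main obstacle to be purely a matter of bookkeeping in the restricted (rank $d<n-1$) setting: unlike the maximal-rank case, the target of $\Psi$ is the space of symmetric endomorphisms of the \emph{moving} subspace $\mathrm{Im}\,B$, which varies with $B$, so one must be careful that $R(B)$ can be chosen to depend smoothly on $B$ and to carry $\mathrm{Im}\,B$ to the fixed reference subspace $\mathrm{Im}\,B_0$. This is where condition \textbf{(H)} is indispensable: in the maximal-rank proof the diffeomorphism $F:B\mapsto A$ of Lemma \ref{BA} did the job automatically, whereas here the analogous statement on the restrictions is not free and must be \emph{assumed}. Once smoothness of $R(B)$ and the invertibility of $d\Psi(B_0)$ are in hand, the conclusion that degeneracy of $A$ becomes a codimension-one condition follows immediately: in the $\lambda$-coordinates, degeneracy means some $\lambda_i=\lambda_j$, which is a single linear equation and hence cuts out a codimension-one subset of the $d$-dimensional submanifold $\mathcal{M}$, exactly the announced non-avoided crossing.
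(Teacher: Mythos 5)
Your overall strategy coincides with the paper's: introduce a smooth family of isometries $R(B)$ carrying $Im\,B$ to the fixed subspace $Im\,B_0$, differentiate the conjugated map $B\mapsto R(B)A(B)R(B)^{-1}$, and observe that the commutator correction $\bigl[dR(B_0)\Delta B,A_0\bigr]$ has vanishing $Im\,B_0\times Im\,B_0$ block precisely because $A_0|_{Im\,B_0}$ is scalar; the paper writes out exactly this block computation. The first parametrization via Lemma \ref{param} is likewise handled identically.

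There is, however, one step that fails as written: the assertion that \textbf{(H)} makes $d\Psi(B_0)$ ``an isomorphism onto the full symmetric space,'' whose ``diagonal part is then an isomorphism onto the diagonal matrices.'' The source of $d\Psi(B_0)$ is the $d$-dimensional tangent space to the manifold of inertias of rank-$d$ balanced configurations, while the symmetric endomorphisms of the $d$-dimensional space $Im\,B_0$ form a space of dimension $d(d+1)/2>d$; so $d\Psi(B_0)$ cannot be onto the full symmetric space, and \textbf{(H)} can only assert a diffeomorphism onto a $d$-dimensional image. Moreover, even granting injectivity of $d\Psi(B_0)$ into the symmetric matrices, post-composing with the linear projection onto the diagonal does not in general preserve injectivity, so your conclusion does not follow from what you state. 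The correct repair --- the one the paper uses implicitly --- is that the transported map already takes values in the diagonal matrices, so that no projection is needed: since $[A,B]=0$ and the non-zero eigenvalues of $B_0$ are simple (condition 1 of general type), $A|_{Im\,B}$ is diagonal in the eigenbasis of $B|_{Im\,B}$, hence $R(B)A(B)R(B)^{-1}|_{Im\,B_0}=-\frac{1}{2}\mathrm{diag}(\lambda_1,\cdots,\lambda_d)$ for every nearby balanced $B$ of rank $d$. With that substitution, \textbf{(H)} together with the vanishing of the commutator block gives directly that $B\mapsto(\lambda_1,\cdots,\lambda_d)$ is a local diffeomorphism, and your codimension-one conclusion then stands.
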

\begin{proof}
If $B$ is close to $B_0$, the unique (up to permutation) isometry of $Im B$ onto $Im B_0$ sending an eigenbasis of $B|_{Im B}$ onto an eigenbasis of $B_0|_{Im B_0}$ can be extended in a smooth way into an isometry $R(B)$ of ${\mathcal D}^*$ close to Identity.  In the eigenbasis of $B_0|_{Im B_0}$, we have
$$RBR^{-1}|_{Im B_0}=diag(b_1,\cdots,b_{d}),\quad \hbox{and}
\quad RAR^{-1}|_{Im B_0}=-\frac{1}{2}diag(\lambda_1,\cdots,\lambda_{d}).$$
Now, let $\mathcal A$ be as above the map from the set of balanced configurations of rank $d$ near $B_0$ to the set of diagonal matrices, defined by:
$${\mathcal A}(B)=R(B)A(B)R(B)^{-1}=-\frac{1}{2}diag(\lambda_1,\cdots,\lambda_{d}, D),$$
where $D$ is an $(n-1-d)\times(n-1-d)$ matrix.
The difference with the case of maximal rank is that, $A_0$ being only proportional to the Identity in restriction to the image of $B_0$,  the second term  $\bigl[dR(B_0)\Delta B,A_0\bigr]$ does not vanish. More precisely, If we write 
$$dR(B_0)\Delta B=\Delta R=
\begin{pmatrix}a&b\\ -b^{tr}&d
\end{pmatrix},\quad\hbox{and}\quad 
A_0=\begin{pmatrix}
\lambda_0Id&0\\
0&D_0
\end{pmatrix}$$
we have
$$\bigl[\Delta R,A_0\bigr]=
\begin{pmatrix}
0&b(D_0-\lambda_0Id)\\
(D_0-\lambda_0Id)b^{tr}&[d,D_0]
\end{pmatrix}.
$$
Hence, in restriction to $Im B_0$, the derivative of $\mathcal A$ at the central configuration $B_0$ again reduces to the derivative at $B_0$ of $B\mapsto A|_{Im B}$.

\end{proof}
\medskip

\noindent {\bf Remarks. 1)} Property {\bf (H)}  can be checked to hold in the (much too) simple case of $\Z/2\Z$-symmetric balanced configurations of rank 2 of two pairs of equal masses in the neighborhood of a generic (i.e. if some explicit condition $K\not=0$ holds) planar rhombus central configuration; moreover, if the two masses are distinct and their ratio avoids one single value $\gamma$, close to 0.575, or its inverse, the inertia ellipsoid of the central configuration is generic; see section \ref{2d}). 

{\bf 2)} Let us suppose that $x_0$ is a balanced configuration of general type of rank $d=n-2$. As there is only one zero eigenvalue in the spectrum of the inertia endomorphism $B_0$, the dimension of its isospectral manifold is the same as the one of the nearby isospectral manifolds of endomorphisms $B$ of maximal rank $n-1$. One deduces that $B_0$ is a regular point of the boundary of the set of inertia matrices of balanced configurations, the local equation of this boundary being the equality to zero of the last eigenvalue $b_{n-1}=\sigma_{n-1}$ of $B$. In terms of the eigenvalues of $A$, if the property {\bf (H)} is satisfied, this equation reads $\lambda_{n-1}=f(\lambda_1,\cdots,\lambda_{n-2})$. The (too simple) example of colinear configurations of 3 bodies is illustrated in the figure 2 of \cite{AC}; More significant examples are 4 bodies in $\R^2$ (see section \ref{2d}) or 5 bodies in $\R^3$.
\smallskip

%

\section{Bifurcations of a periodic relative equilibrium into a family of quasi-periodic ones}
We consider continuous families of quasi-periodic relative equilibria 
$$x_s(t)=e^{\Omega_st}x_s(0),\quad s\ge 0\; \hbox{small},$$
of a family $s\mapsto x_s(0)$ of balanced configurations in some Euclidean space $E$, originating from a periodic relative equilibrium of a central configuration $x_0$ (in particular, $\Omega_0=\omega J$, where $J$ is a complex structure on the ambient space $E$).  {\it We suppose that all the $x_s,\, s\ge 0$ have the same dimension $d$}. 
We make the following assumptions
(the second one can be satisfied by composing with a well chosen family of rotations): 

1) the spectral type of $\Omega_s$ is constant for $s>0$ small, 

2) the eigenspaces $E_1,\cdots, E_r$ of $\Omega_s$ have a limit when  $s\to 0$.
\smallskip

\noindent We shall study the two extreme cases: the ``generic" case where the dimension $2p$ of $E$ is twice the dimension $d$ of the configuration $x_0$ and the case where $2p=d$ if $d$ is even, $2p=d+1$ if $d$ is odd.
\goodbreak

\subsection{The generic case}\label{generic} 
If for $s>0$ small the first $d$ eigenvalues of the Wintner-Conley matrices $A_s$ of the balanced configuration $x_s(0)$ are all distinct, that is if the quasi-periodic relative equilibria $x_s(t)$ which bifurcate from the periodic relative equilibrium $x_0(t)$ have $d$ frequencies,
the eigenspaces of $\Omega_s$ are necessarily generated by an eigenvector of $\Omega_s^2$ (and hence of $S_s$) contained in $\hbox{Im}\,x_s$ and a vector orthogonal to  $\hbox{Im}\,x_s$. Going to the limit when $s$ tends to 0, we get that $\hbox{dim}\,E=2\,\hbox{dim}\,\hbox{Im}\,x_0$ and that the complex structure $J$ sends each eigenvector $\rho_k$ of $S_0$ onto a vector $k$ orthogonal to $\hbox{Im}\,x_0=\hbox{Im}\,S_0$. 
\smallskip

\noindent Given a central configuration of dimension $d$ in an euclidean space of dimension $2d$, a {\it basic hermitian structure} is one for which there exists a partition of an eigenbasis
of $S_0$ into $d$ pairs such that the planes generated by the two members of each pair are complex lines (see \cite{C1}).

\begin{definition}
In the case when $dim\,E= 2\,dim\,Im\,x_0$, a basic hermitian structure will be called ``of extrinsic type" if the partition is, as above,  of the type $\{\rho_1,v_{1}\}\cup\cdots\cup\{\rho_d,v_{d}\}$, where $\{\rho_1,\cdots,\rho_d\}$ is a basis of $Im\,x_0$ formed of eigenvectors of $S_0$ and $\{v_{1},\cdots,v_{d}\}$ is a basis of $Im\,x_0^\perp$.
\end{definition}

\noindent From the above discussion, we get

\begin{proposition}\label{extrinsic} Let $x(t)=e^{\omega  Jt}x_0$ be a relative equilibrium motion of a central configuration $x_0$ of dimension $d$ in a space $E$ of dimension $2d$. One supposes that, from this periodic relative equilibrium, stems a one parameter family of quasi-periodic relative equilibria with $d$ frequencies $x_s(t)=e^{\Omega_s t}x_{0},\, \Omega_0=\omega J$. Then the hermitian structure defined by $J$ on the Euclidean space $E$ is basic and of extrinsic type. 
\end{proposition}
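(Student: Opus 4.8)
The plan is to read off the structure of the limiting complex structure $J$ directly from the analysis of the invariant planes of $\Omega_s$ carried out in section \ref{generic}, and then to verify that the resulting partition is exactly of the extrinsic type demanded by the definition. First I would invoke the hypothesis that the bifurcating family $x_s(t)$ has $d$ frequencies for $s>0$ small; by the discussion preceding the proposition this forces $\dim E=2\dim\,\mathrm{Im}\,x_0=2d$ and forces each eigenspace $E_k$ of $\Omega_s$ to be a real plane spanned by one eigenvector of $S_s$ lying in $\mathrm{Im}\,x_s$ together with one vector orthogonal to $\mathrm{Im}\,x_s$. This is the crucial point: having the maximal number $d$ of distinct frequencies rules out the second type of invariant plane described in section \ref{dimMin}, namely a plane generated by two vectors of $\mathrm{Im}\,x$ sharing an eigenvalue $\lambda_k=\lambda_l$ of $A$.

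Next I would pass to the limit $s\to 0$, using assumption 2) that the eigenspaces $E_1,\dots,E_r$ of $\Omega_s$ converge. Each limiting plane $E_k$ is then spanned by a limiting eigenvector $\rho_k$ of $S_0$ in $\mathrm{Im}\,x_0$ and a limiting vector $v_k$ orthogonal to $\mathrm{Im}\,x_0=\mathrm{Im}\,S_0$; since $\Omega_0=\omega J$ with $J$ a complex structure, $E_k$ is a complex line for $J$ and $J\rho_k$ is proportional to $v_k$. As $\{\rho_1,\dots,\rho_d\}$ is an eigenbasis of $S_0$ restricted to $\mathrm{Im}\,x_0$ and the $v_k$ are, by construction, mutually orthogonal unit vectors in the $d$-dimensional space $(\mathrm{Im}\,x_0)^\perp$, I would check that $\{v_1,\dots,v_d\}$ is a basis of $(\mathrm{Im}\,x_0)^\perp$. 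The eigenbasis $\{\rho_1,\dots,\rho_d,v_1,\dots,v_d\}$ of $S_0$ then splits into the $d$ complex lines $\{\rho_k,v_k\}$, so the hermitian structure is basic, and the partition has precisely the form $\{\rho_1,v_1\}\cup\cdots\cup\{\rho_d,v_d\}$ with the $\rho$'s spanning $\mathrm{Im}\,x_0$ and the $v$'s spanning its orthogonal, which is exactly the extrinsic type.

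The main obstacle I anticipate is justifying the limit carefully. The eigenvectors of $\Omega_s$ are genuinely complex, so the ``plane'' I want in the limit is the real two-dimensional space underlying a pair of conjugate complex eigenlines of $\Omega_s$; I must argue that these real planes depend continuously on $s$ and that their limits remain genuine two-planes (not degenerating), which is guaranteed by the constancy of the spectral type in assumption 1) together with the nonvanishing of the limiting frequency $\omega$. A subtler point is verifying that, in the limit, the component of $E_k$ inside $\mathrm{Im}\,x_s$ really converges to a single eigenvector $\rho_k$ of $S_0$ rather than to a higher-dimensional eigenspace: this is where the $d$-distinct-frequencies hypothesis does the essential work, since distinct $\lambda_k=\omega_k^2$ for small $s>0$ keep the $d$ planes transverse and prevent any pair of $\rho$'s from collapsing into a common eigenspace of $S_0$. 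Once this transversality in the limit is secured, the identification of the partition is immediate and the proposition follows.
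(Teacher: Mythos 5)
Your argument is exactly the paper's: the proposition is stated as an immediate consequence of the discussion in the subsection on the generic case, which is precisely your first two paragraphs (the $d$-frequency hypothesis forces each invariant plane of $\Omega_s$ to be spanned by an eigenvector of $S_s$ in $\mathrm{Im}\,x_s$ and a vector orthogonal to $\mathrm{Im}\,x_s$, and the limit $s\to 0$ then exhibits $J$ as pairing each $\rho_k$ with a vector in $(\mathrm{Im}\,x_0)^\perp$). Your third paragraph supplies convergence details the paper leaves implicit, but the route is the same and the proof is correct.
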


\subsection{Bifurcations without increase of dimension}
Let $x_0\in\mathcal{D}\otimes E$ be a central configuration of dimension $d$ of $n$ bodies in the Euclidean space $E$ of dimension $2p=d$ if $d$ is even, $2p=d+1$ if $d$ is odd.
Let $x(t)=e^{\omega Jt}x_0$ be a relative equilibrium of $x_0$ in $E$ directed by the complex structure $J$.
From paragraph \ref{dimMin}, we know that a family of quasi-periodic relative equilibria of balanced configurations can bifurcate {\it in the same space} $E$ from this periodic relative equilibrium if and  only in two conditions are satisfied:
\vskip0.2cm

1) $E$ admits a direct sum decomposition $E=E_1\oplus\cdots\oplus E_r$ into at least two $J$-complex subspaces generated (over $\R$) by eigenvectors of the inertia $S_0$; 
\vskip0.2cm

2) The non-zero eigenvalues of the Wintner-Conley matrix of $x_0$ corresponding to eigenvectors generating any of these subspaces are equal.   
\smallskip

\noindent Thanks to proposition \ref{d<n-1}, one deduces the codimension of the corresponding bifurcations; in particular : \goodbreak

\begin{proposition}\label{NonGen}
Let $x_0$ be a central configuration of general type and satisfying {\bf(H)}, of dimension $d=2p-1$ or $d=2p$ in the Euclidean space $E$ of dimension $2p$.  In the manifold\footnote{with boundary if $d=2p-1$ and $n\ge 2p+1.$} of balanced configurations close enough to $x_0$ and of the same dimension, those which admit a relative equilibrium motion in $E$  form a stratified subset of dimension $p$. The main stratum, of dimension $p$, corresponds to quasi-periodic relative equilibria in $E$ with $p$ frequencies, the smallest stratum (if $p\ge 2$), of dimension 2, to quasi-periodic relative equilibria in $E$ with $2$ frequencies. 
\end{proposition}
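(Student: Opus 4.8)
\textbf{Proof strategy for Proposition \ref{NonGen}.}
The plan is to count dimensions of the strata by translating the two geometric conditions of the preceding paragraph into conditions on the eigenvalues $(\lambda_1,\cdots,\lambda_d)$ of $A|_{Im B}$, using Proposition \ref{d<n-1} to treat these eigenvalues as genuine local coordinates on the $d$-dimensional manifold of balanced configurations of rank $d$ near $x_0$. Condition 1 says that $E$, of dimension $2p$, splits into $J$-complex subspaces each generated by eigenvectors of $S_0$; condition 2 says that within each such block the corresponding non-zero eigenvalues of the Wintner--Conley endomorphism coincide. Since for a relative equilibrium motion in $E$ we have $\omega_k^2=\lambda_k$ and the complex (hermitian) planes of $\Omega$ pair up eigenvectors of $S$ sharing the same $\lambda_k$ (as recalled in section \ref{dimMin}), the admissibility of a motion in the fixed space $E$ is exactly the requirement that the multiset $\{\lambda_1,\cdots,\lambda_d\}$ admit a grouping into the block sizes prescribed by a decomposition $E=E_1\oplus\cdots\oplus E_r$ with $\dim E_l$ even.

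First I would dispose of the case $d=2p$ (even). Here $\mathrm{Im}\,x_0=E$, so every eigenvector of $S_0$ already lies in $E$, and a hermitian structure exists on a block $E_l$ iff the $\lambda$-values of its generating eigenvectors agree. To use the fewest frequencies the blocks should be as large as possible, but each distinct frequency $\tilde\omega_l$ forces one independent value among the $\lambda$'s; so a motion with exactly $q$ frequencies corresponds to partitioning the $d=2p$ eigenvalues into $q$ groups of \emph{even} size (each group being a single multiple value), which imposes $d-q$ independent equalities among the coordinates $\lambda_k$ and hence cuts out a stratum of dimension $d-(d-q)=q$. The maximal stratum has $q=p$ (all blocks of size $2$, the generic pairing), giving dimension $p$ and $p$ frequencies; the smallest nontrivial stratum groups all eigenvalues into a single value, $q=1$, but that is the central configuration itself, so the smallest \emph{proper} stratum has $q=2$ and dimension $2$, realized by one block of size $2p-2$ and one of size $2$ (when $p\ge 2$). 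I would verify that each prescribed equality pattern genuinely defines a submanifold of the stated dimension transverse to the others by invoking that the $\lambda_k$ are coordinates (Proposition \ref{d<n-1}) together with condition 2) of Definition \ref{GenType} on non-degeneracy.

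For the odd case $d=2p-1$ the same bookkeeping applies, with the one modification that $\mathrm{Im}\,x_0$ is a hyperplane of $E$: now exactly one eigenvector $\rho_k\in\mathrm{Im}\,x_0$ is paired by $J$ with a vector of $(\mathrm{Im}\,x_0)^\perp$, forming an ``extrinsic'' complex line whose frequency is free and imposes no equality (this is the block of dimension $2$ straddling the image, accounting for the single extra dimension of $E$). The remaining $2p-2=d-1$ eigenvalues must be grouped into even blocks inside $\mathrm{Im}\,x_0$, so a $q$-frequency motion again carves out a stratum of dimension $q$ by the same count, with maximal stratum $q=p$ and smallest proper stratum $q=2$; the footnote about the boundary when $n\ge 2p+1$ reflects that for $d<n-1$ the ambient manifold of Lemma \ref{param} has a boundary where the last eigenvalue $b_d$ degenerates, as described in Remark 2) of section \ref{general}.

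The main obstacle I anticipate is not the dimension arithmetic but justifying that each combinatorial equality pattern is actually \emph{realized} by balanced configurations and that the resulting subsets fit together as an honest stratification rather than merely a set-theoretic union. The realization is guaranteed by Proposition \ref{d<n-1}: since $(\lambda_1,\cdots,\lambda_d)$ are local coordinates, every linear equality pattern among them is attained on a nonempty submanifold through $B_0$, the central configuration corresponding to full coincidence $\lambda_1=\cdots=\lambda_d$. The stratified structure then follows because the loci are defined by coincidence patterns of coordinate functions, whose closure relations are exactly the refinement partial order on partitions; the delicate point to check carefully is the behavior along the ``avoided vs. non-avoided crossing'' walls discussed in section \ref{deg}, i.e. that a double eigenvalue of $A|_{Im B}$ is here a genuine codimension-$1$ phenomenon (the non-avoided crossing) rather than the codimension-$2$ one of unconstrained symmetric matrices, which is precisely what Proposition \ref{d<n-1} secures by making $A|_{Im B}$ a local diffeomorphic image of the configuration space.
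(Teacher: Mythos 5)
Your proposal is correct and follows essentially the same route as the paper: both use Proposition \ref{d<n-1} to treat the eigenvalues $\lambda_1,\cdots,\lambda_d$ of $A|_{Im B}$ as local coordinates near $x_0$ and count the equalities imposed by splitting $E$ into even-dimensional $J$-invariant blocks (with one extrinsic plane in the odd case). Your uniform bookkeeping by the number $q$ of frequencies reproduces the paper's two extreme computations (main stratum of dimension $p$, smallest of dimension $2$) and also covers the intermediate strata the paper explicitly leaves to the reader.
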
 
\noindent {\sl One should not forget that because of the projective invariance, the pertinent dimensions are respectively $p-1$ and $1$.}
\smallskip

\begin{proof} It is enough to notice that the main stratum corresponds to identities of the following type between the eigenvalues $\lambda_i$ of the endomorphism $A_B$ (see proposition \ref{PropGenType}):

\begin{equation*}
\begin{split}
&\lambda_{i_1}=\lambda_{j_1},\cdots \lambda_{i_{p-1}}=\lambda_{j_{p-1}}\quad\hbox{for $(2p-1)$-dimensional configurations},\\
&\lambda_{i_1}=\lambda_{j_1},\cdots \lambda_{i_{p}}=\lambda_{j_{p}}\quad\hbox{for $2p$-dimensional configurations}.
\end{split}
\end{equation*}
In both cases, the dimension of the stratum is
$$(2p-1)-(p-1)=2p-p=p.$$
\noindent In the same way, the smallest stratum corresponds to the equations
\begin{equation*}
\begin{split}
&\lambda_{i_1}=\cdots=\lambda_{i_{2p-2}}\quad\hbox{for $(2p-1)$-dimensional configurations},\\
&\lambda_{i_1}=\cdots=\lambda_{i_{2p-2}}\;\hbox{and}\; \lambda_{j_1}=\lambda_{j_2}\quad\hbox{for $2p$-dimensional configurations}.
\end{split}
\end{equation*}
In both cases, the dimension of the stratum is $$(2p-1)-(2p-3)=2p-(2p-3)-1=2.$$
\end{proof}
\noindent I leave to the reader the pleasure of describing the intermediate strata.
\medskip

\noindent {\bf Examples} 1) Central or balanced configurations of 3 bodies which are not of general type can be observed in figure 2 of \cite{AC}. On the other hand, one notices also on these figures examples of balanced configurations for which $B_0$ is proportional to the Identity but is nevertheless a regular point of the set of balanced configurations (this case is realized when the center of mass of the configuration coincides with the orthocentre of the triangle (see also ex. 3)).
\vskip0.1cm

2) Three-dimensional 4-body central configurations of general type are characterized in Corollary \ref{4bodyGenType}.
\vskip0.1cm

3) The three-dimensional balanced configurations of four bodies farthest from being of general type -- the ones with $B$ proportional to Identity -- are the orthocentric tetrahedra (i.e. the tetrahedra which possess an orthocenter, which is the intersection of the four heights) such that the orthocenter coincides with the center of mass; this is equivalent to the mutual distances being given by $r_{ij}^2=\hbox{constant}\bigl(\frac{1}{m_i}+\frac{1}{m_j}\bigr)$.
\smallskip

\noindent{\bf Remark.} 
More generally, the proof of proposition \ref{DegIn} implies that higher degeneracies with $\nu_2$ pairs, $\nu_3$ triples, etc$\ldots$, of equal eigenvalues, instead of having the generic codimension $\sum{\frac{1}{2}(i-1)(i+2)\nu_i}=2\nu_2+5\nu_3+9\nu_4+\cdots$ have codimension $\sum{(i-1)\nu_i}=\nu_2+2\nu_3+3\nu_4+\cdots$
\goodbreak

\section{The case of four bodies}\label{Four}
The unique non planar central configuration of four arbitrary masses is the regular tetrahedron.
We show that, whatever be the masses, it belongs to at least three (and exactly three in the generic case)
1-parameter (up to rotation and scaling) families of balanced configurations which admit a relative equilibrium motion in $\R^4$. Two cases have to be treated separately, depending of whether or not three of the masses are equal (see corollary \ref{exact3} and section \ref{3=}).  

We use the following notations for the squared mutual distances:
$$r_{13}^2=a, \; r_{14}^2=b', \; r_{12}^2=b'', \; r_{34}^2=d', \; r_{32}^2=d'', \; r_{24}^2=f.$$
They were chosen because they specialize nicely to the case when the masses $m_2$ and $m_4$ are symmetric with respect to the plane containing $m_1$ and $m_3$ (see section \ref{Sym} and \cite{C3}). 
\begin{center}
\includegraphics[scale=1]{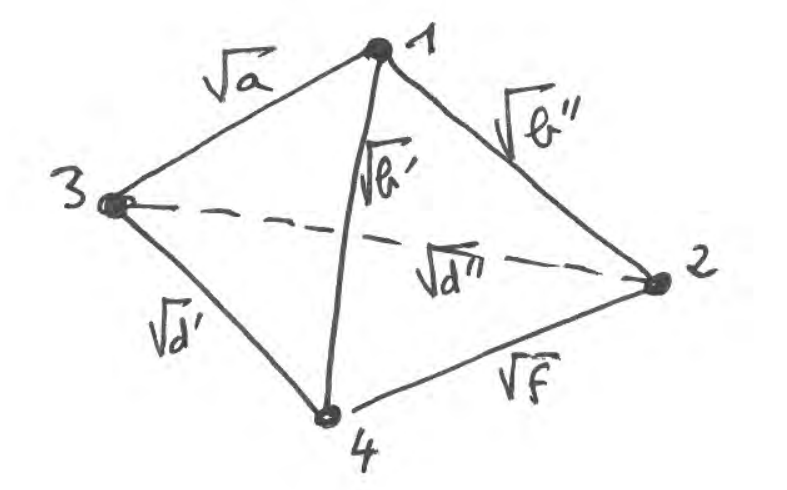} 

Figure 1:  4-body configurations.
\end{center}
\goodbreak

\noindent A convenient $\mu^{-1}$-orthonormal basis of ${\mathcal D}^*$ is $\{u_1,u_2,u_3\}$ with
\begin{equation*}
\left\{
\begin{split}
u_1&=\kappa_1\bigl(m_1(m_2+m_4), -m_2(m_1+m_3), m_3(m_2+m_4), -m_4(m_1+m_3)\bigr),\\
u_2&=\kappa_2(1,0,-1,0),\\
u_3&=\kappa_3(0,1,0,-1),
\end{split}
\right.
\end{equation*}
where
$$
\kappa_1^2=\frac{1}{M(m_1+m_3)(m_2+m_4)},\quad \kappa_2^2=\frac{m_1m_3}{m_1+m_3},\quad 
\kappa_3^2=\frac{m_2m_4}{m_2+m_4}. 
$$

\subsection{The intrinsic inertia matrix}\label{SubIn}
Expressed in the basis $\{u_1,u_2,u_3\}$, 
$B=\begin{pmatrix}
u&z&y\\ z&v&x\\ y&x&w
\end{pmatrix}, $
where
\begin{equation*}
\left\{
\begin{split}
u&=\frac{1}{M}\bigl[m_2(m_1b''+m_3d'')+m_4(m_1b'+m_3d')\bigr]-\frac{1}{M}\left[m_1m_3\frac{Y}{Z}a+m_2m_4\frac{Z}{Y}f\right],\\
v&=\frac{m_1m_3}{m_1+m_3}a,\quad w=\frac{m_2m_4}{m_2+m_4}f,\\
x&=\frac{1}{2}V(d''-d'+b'-b''),\\
y&=\frac{U}{2MZ}\bigl[m_1(b'-b'')+m_3(d'-d'')\bigr]+\frac{U(m_4-m_2)}{2MY}f,\\
z&=\frac{T}{2MY}\bigl[m_2(b''-d'')+m_4(b'-d')\bigr]+\frac{T(m_1-m_3)}{2MZ}a,
\end{split}
\right.
\end{equation*}
with the following notations ($X=MYZ=TU/V$ is for future use in \ref{WCgeneral}): 
\begin{equation*}
\left\{
\begin{split}
X&=\frac{M}{(m_1+m_3)(m_2+m_4)},\; &&Y=\frac{1}{m_1+m_3},\\
Z&=\frac{1}{m_2+m_4},\; &&T=\frac{1}{m_1+m_3}\sqrt{\frac{Mm_1m_3}{m_2+m_4}},\\
U&=\frac{1}{m_2+m_4}\sqrt{\frac{Mm_2m_4}{m_1+m_3}},\; &&V=\sqrt{\frac{m_1m_2m_3m_4}{(m_1+m_3)(m_2+m_4)}}\, \cdot
\end{split}
\right.
\end{equation*}
For the regular tetrahedron $(a=b'=b''=d'=d''=f=1)$,  $B$ becomes the matrix
$$\begin{pmatrix}
\frac{(m_2+m_4)(m_1^2+m_3^2)}{2M(m_1+m_3)}+\frac{(m_1+m_3)(m_2^2+m_4^2)}{2M(m_2+m_4)}&
-\frac{1}{2}\sqrt{\frac{m_1m_3(m_2+m_4)}{M}}\frac{m_3-m_1}{m_1+m_3}&
-\frac{1}{2}\sqrt{\frac{m_2m_4(m_1+m_3)}{M}}\frac{m_2-m_4}{m_2+m_4}\\
-\frac{1}{2}\sqrt{\frac{m_1m_3(m_2+m_4)}{M}}\frac{m_3-m_1}{m_1+m_3}&
\frac{m_1m_3}{m_1+m_3}&0\\
-\frac{1}{2}\sqrt{\frac{m_2m_4(m_1+m_3)}{M}}\frac{m_2-m_4}{m_2+m_4}&0&\frac{m_2m_4}{m_2+m_4}
\end{pmatrix}$$
which reduces to $B=\frac{1}{2}Id$ when all the masses are equal to 1. 

\subsection{Degeneracies of the inertia of the regular tetrahedron}\label{EquDeg}
As explained in section \ref{deg}, we shall use the criterion given by lemma \ref{Lax}, which leads to much more transparent equations than the resultant:
$B$ is degenerate if and only if there exists a non trivial antisymmetric matrix
$$
R=\begin{pmatrix}
0&\zeta&-\eta\\
-\zeta&0&\xi\\
\eta&-\xi&0
\end{pmatrix},
$$
which commutes with $B$, that is such that $[B,R]=BR-RB=0$. Writing the six coefficients of this symmetric matrix in the order $(u,v,w,x,y,z)$, this is equivalent to the following linear equation:
$$
\begin{pmatrix}
0&2y&-2z\\
-2x&0&2z\\
2x&-2y&0\\
v-w&-z&y\\
z&w-u&-x\\
-y&x&u-v
\end{pmatrix}
\begin{pmatrix}
\xi\\ \eta\\ \zeta
\end{pmatrix}
=
\begin{pmatrix}
0\\ 0\\ 0\\ 0\\ 0\\ 0
\end{pmatrix}.
$$
This equation has a non trivial solution if and only if the above $6\times 3$ matrix is not of maximal rank, that is if its rows generate a subspace of dimension 2 or less. We shall distinguish two cases:
\smallskip

1) two of the off-diagonal coefficients $x,y,z$ of $B$ are equal to zero. Up to a reordering of the basis, we may suppose that $x=y=0$. Then the condition is reduced to the vanishing of two minors: 
$(u-v)\bigl[z^2+(v-w)(w-u)\bigr]$ and $z\bigl[z^2+(v-w)(w-u)\bigr]$, that is 
\begin{equation*}
(C_1)\left\{
\begin{split}
\hbox{either}&\; z=0\quad \hbox{and}\; \quad (u-v)(v-w)(w-u)=0,\\
\hbox{or}&\; z\not=0\quad \hbox{and}\; \quad z^2+(v-w)(w-u)=0.
\end{split}
\right.
\end{equation*}
\smallskip

2) at most one of the off-diagonal coefficients is equal to zero. Then the first two lines are linearly independent while the sum of the first three is equal to zero (because the trace of a commutator vanishes). Writing that the last three lines belong to the plane generated by the first two leads to the following equations
\begin{equation*}
(C_2)\left\{
\begin{split}
x(y^2-z^2)+(v-w)yz=0,\quad\quad(C_2')\\
y(z^2-x^2)+(w-u)zx=0,\quad\quad(C_2'')\\
z(x^2-y^2)+(u-v)xy=0.\quad\quad(C_2''')
\end{split}
\right.
\end{equation*}
As expected, these equations are not independent: multiplying the first by $x$, the second by $y$, the third by $z$ and adding, one gets 0. Recall that they are not valid if two off-diagonal coefficients vanish.
\begin{proposition}\label{DegIn}
The intrinsic inertia matrix of the regular tetrahedron configuration of four masses $m_1,m_2,m_3,,m_4$ is degenerate if and only if at least three of the masses are equal.
\end{proposition}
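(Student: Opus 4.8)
The plan is to push Lax's criterion (Lemma \ref{Lax}) through the explicit equations $(C_1)$ and $(C_2)$, specialized to the regular-tetrahedron inertia matrix computed in \S\ref{SubIn}. The single most useful observation is that for the regular tetrahedron the off-diagonal coefficient $x$ always vanishes: indeed $x=\frac12 V(d''-d'+b'-b'')$ and $b'=b''=d'=d''=1$, so $x=0$. Reading off the remaining entries one finds $z=0$ exactly when $m_1=m_3$ and $y=0$ exactly when $m_2=m_4$ (the factors $\sqrt{m_1m_3}$, $\sqrt{m_2m_4}$ being nonzero). Since the pairs $\{1,3\}$ and $\{2,4\}$ enter the basis $\{u_1,u_2,u_3\}$ symmetrically (swapping them exchanges $v\leftrightarrow w$ and $z\leftrightarrow y$), I may treat Case 1 under the single normalization $m_2=m_4$.

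First I would dispose of Case 2, where $m_1\neq m_3$ and $m_2\neq m_4$, i.e.\ $y,z\neq0$ while $x=0$. Equation $(C_2'')$ then reduces to $yz^2=0$, which is impossible; hence $B$ is non-degenerate. This is consistent with the statement, because $m_1\neq m_3$ and $m_2\neq m_4$ preclude three equal masses: any triple of equal masses must contain one of the pairs $\{1,3\}$ or $\{2,4\}$.

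Then comes Case 1, say $m_2=m_4=:q$. If moreover $m_1=m_3=:p$ (two equal pairs), $B$ is diagonal and I would simply compute $u=\frac{pq}{p+q}$, $v=\frac p2$, $w=\frac q2$; each of $u-v$, $v-w$, $w-u$ is a nonzero multiple of $p-q$, so a coincidence of eigenvalues forces $p=q$, i.e.\ all four masses equal. If instead $m_1\neq m_3$, then $x=y=0$ and $z\neq0$, so the second branch of $(C_1)$ says that $B$ is degenerate precisely when
\[
z^2=(w-u)(w-v).
\]
Substituting the explicit $u,v,w,z$ (with $m_1=a$, $m_3=c$) turns this into the polynomial identity $ac(c-a)^2=\bigl[q(a+c)-(a^2+c^2)\bigr]\bigl[q(a+c)-2ac\bigr]$.

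The main obstacle is the simplification of this last identity, which at first sight is a quartic in $a,c,q$. Introducing the elementary symmetric functions $P=a+c$ and $Q=ac$, both sides lose their $-4Q^2$ term and what remains is $P^2\bigl(q^2-qP+Q\bigr)=0$; since $P=m_1+m_3\neq0$, this is $q^2-q(a+c)+ac=(q-a)(q-c)=0$, i.e.\ $m_1=m_2$ or $m_3=m_2$ — three equal masses. The symmetric sub-case $z=0$, $y\neq0$ follows at once from the pair-swap symmetry. Assembling Case 1 and Case 2 yields ``$B$ degenerate $\iff$ at least three masses are equal''. As a conceptual check on the reverse implication, three equal masses endow the regular tetrahedron's mass distribution with a $C_3$ rotation axis (through the remaining vertex and the centroid of the opposite face), making its inertia that of a symmetric top, with a double eigenvalue; and $B$ shares its non-zero eigenvalues with the spatial inertia $S$.
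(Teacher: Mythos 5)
Your proof is correct and takes essentially the same route as the paper's: Lax's commutation criterion via the explicit systems $(C_1)$--$(C_2)$, the observation that $x=0$ together with $(C_2'')$ forces $m_1=m_3$ or $m_2=m_4$, and then the reduction of the residual quadratic condition to $(m_2-m_1)(m_2-m_3)=0$. Your symmetric-function manipulation is precisely the ``direct computation'' the paper leaves implicit (it agrees with the stated factorization $-\frac{m_2}{2M}(m_1-m_2)(m_3-m_2)=0$), the only cosmetic difference being that you extract the ``if'' direction from the same algebra instead of from the paper's separate $\Z/3\Z$-symmetry argument.
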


\begin{proof} The if part follows without calculation from the fact that an ellipsoid with $\Z/3\Z$-symmetry is necessary of revolution. For the converse, as the off-diagonal coefficient $x$ is equal to $0$,  equations $(C_2)$ imply that another off-diagonal coefficient must be equal to zero, that is $m_1=m_3$ or $m_2=m_4$. Hence equations $(C_2)$ are no more pertinent to decide of the degeneracy. If we suppose $m_2=m_4$, they must be replaced by equations $(C_1)$:

1) if all off-diagonal coefficients are zero, that is if $m_2=m_4$ and $m_1=m_3$, we have
$$B=\hbox{diag}\left(\frac{m_1m_2}{m_1+m_2}, \frac{m_1}{2},\frac{m_2}{2}\right),$$ 
hence degeneracy occurs only if the four masses are equal;

2) if $m_1\not=m_3$, a direct computation shows that $(C_1)$ reads
$$
-\frac{m_2}{2M}(m_1-m_2)(m_3-m_2)=0,
$$
which implies that $m_1=m_2=m_4$ or $m_2=m_3=m_4$.
Starting with $m_1=m_3$ but $m_2\not=m_4$ we would have found the remaining possibilities $m_1=m_2=m_3$ and $m_1=m_3=m_4$.
\end{proof}

\noindent If, for example, $m_2=m_3=m_4$, we have
$$
B=\begin{pmatrix}
\frac{m_2(3m_1^2+3m_2^2+2m_1m_2)}{2(m_1+3m_2)(m_1+m_2)}&-\frac{m_2}{2}\sqrt{\frac{2m_1}{m_1+3m_2}}\frac{m_2-m_1}{m_1+m_2}&0\\
-\frac{m_2}{2}\sqrt{\frac{2m_1}{m_1+3m_2}}\frac{m_2-m_1}{m_1+m_2}&\frac{m_1m_2}{m_1+m_2}&0\\
0&0&\frac{m_2}{2}
\end{pmatrix}.
$$
whose eigenvalues are 
$$\lambda_1=\frac{2m_1m_2}{m_1+3m_2},\quad \lambda_2=\lambda_3=\frac{m_2}{2}\; .$$

\begin{corollary}\label{4bodyGenType}
The unique 3-dimensional central configuration of 4 bodies -- the regular tetrahedron -- is of general type if and only if no three of the masses are equal.
\end{corollary}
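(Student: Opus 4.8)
The two clauses of Definition \ref{GenType} are governed by different inputs, so I would split the proof accordingly. Clause (1), that the three eigenvalues of $B_0$ (all nonzero, since the tetrahedron has rank $d=n-1=3$, whence $B_0$ is invertible) be distinct, is precisely the assertion that $B_0$ is non-degenerate; by Proposition \ref{DegIn} this fails exactly when at least three masses are equal. That already yields the implication ``general type $\Rightarrow$ no three masses equal,'' and reduces the converse to verifying clause (2) under the assumption that no three masses coincide, so that $B_0$ has simple spectrum and its isospectral submanifold is a genuine $3$-dimensional manifold near $B_0$.

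For clause (2) the plan is to show something stronger and mass-independent: that $B_0$ is a \emph{non-degenerate local minimum} of $\tilde U$ restricted to its isospectral submanifold, \emph{whatever} the masses. The starting point is that the squared mutual distances $s_{ij}=r_{ij}^2$ are \emph{linear} coordinates on the space of symmetric endomorphisms $B$ (this is exactly the linear change of variables exhibited in \ref{SubIn} and underlying Lemma \ref{BA}). Hence $\tilde U(B)=\sum_{i<j}m_im_j\Phi(s_{ij})$ separates as a sum of functions of single linear coordinates, and its ambient Hessian carries no contribution from curvature of the change of variables:
\begin{equation*}
D^2\tilde U(B_0)=\sum_{i<j}m_im_j\,\Phi''(s_{ij})\,ds_{ij}\otimes ds_{ij}.
\end{equation*}
At the regular tetrahedron all $s_{ij}=1$ and $\Phi''(1)=\tfrac34\mathcal G>0$; moreover the six differentials $ds_{ij}$ form a basis of the dual of the space of symmetric matrices (again because $B\mapsto(s_{ij})$ is a linear isomorphism). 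Therefore $D^2\tilde U(B_0)=\Phi''(1)\sum_{i<j}m_im_j\,ds_{ij}\otimes ds_{ij}$ is \emph{positive definite on the whole} space of symmetric endomorphisms, with no restriction on the masses.

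It remains to pass from the ambient Hessian to the Hessian of the restriction at the critical point $B_0$. Parametrising the isospectral orbit by $\gamma(t)=e^{tR}B_0e^{-tR}$ with $R$ antisymmetric gives $\dot\gamma(0)=[R,B_0]$ and $\ddot\gamma(0)=[R,[R,B_0]]$, so that
\begin{equation*}
\mathrm{Hess}\bigl(\tilde U|_{\mathrm{isosp}}\bigr)(B_0)\bigl([R,B_0],[R,B_0]\bigr)=D^2\tilde U(B_0)\bigl([R,B_0],[R,B_0]\bigr)+\bigl\langle A_0,[R,[R,B_0]]\bigr\rangle,
\end{equation*}
where I use that the gradient of $\tilde U$ at $B_0$ is proportional to $A_0$. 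Here the \emph{central} character of the tetrahedron is decisive: $A_0=\lambda_0\,\mathrm{Id}$ on $\mathrm{Im}\,B_0={\cal D}^*$, so the correction term is proportional to $\mathrm{tr}\,[R,[R,B_0]]=0$, the trace of a commutator. The restricted Hessian thus coincides with the restriction of the positive-definite ambient Hessian to the tangent space $\{[R,B_0]\}$, hence is itself positive definite, in particular non-degenerate. This establishes clause (2) whenever clause (1) holds, and combining with Proposition \ref{DegIn} gives ``general type $\Leftrightarrow$ no three masses equal.''

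The main obstacle I anticipate is conceptual rather than computational: one must cleanly separate the ambient Hessian from the intrinsic Hessian of the constrained function and check that the second-fundamental-form correction drops out. Everything hinges on the two structural facts that make this transparent — linearity of the $s_{ij}$ as coordinates (killing the Christoffel-type term in the ambient Hessian) and centrality of the tetrahedron (killing the trace-of-commutator correction) — after which positive-definiteness is immediate from $\Phi''(1)>0$ and $m_im_j>0$, with no case analysis in the masses.
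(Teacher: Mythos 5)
Your proof is correct and follows essentially the same route as the paper: clause (1) is delegated to Proposition \ref{DegIn}, and clause (2) rests on the positive definiteness of the Hessian of $\tilde U$ in the linear coordinates $s_{ij}$ together with the centrality of the regular tetrahedron to dispose of the constraint. The paper passes through the fixed-trace submanifold and leaves the verification implicit (``it is easy to prove''), whereas you check directly on the isospectral orbit that the second-fundamental-form correction is the trace of a commutator and hence vanishes --- a more explicit rendering of the same argument.
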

\begin{proof} 
The regular tetrahedron is the unique critical point of the restriction of the potential to the 4-body configurations with fixed moment of inertia with respect to the centre of mass (i.e. fixed trace of $B$). Using the squared mutual distances $r_{ij}^2$ as independent variables, it is easy to prove that it is a non-degenerate minimum. Hence its restriction to the isospectral manifold is also a non degenerate minimum.
\end{proof}
\smallskip

\noindent From proposition \ref{PropGenType} on then deduces
\begin{corollary}\label{exact3}
If no three masses are equal, the balanced 4-body configurations (up to rotation and scaling) close to the regular tetrahedron form a 2-dimensional manifold; the balanced configurations which admit a relative equilibrium motion in $\R^4$ form three regular curves intersecting at the regular tetrahedron.
\end{corollary}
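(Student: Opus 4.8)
The plan is to combine Corollary \ref{4bodyGenType} with Proposition \ref{PropGenType} and then simply to read off the degeneracy locus of $A$ in the resulting parametrization. Since no three masses are equal, Corollary \ref{4bodyGenType} guarantees that the regular tetrahedron $x_0$ is a central configuration of general type, and here its rank is maximal, $d=n-1=3$, so condition \textbf{(H)} is automatic and Proposition \ref{PropGenType} applies directly. It tells us that the intrinsic inertia endomorphisms $B$ of balanced configurations close to $x_0$ form a $3$-dimensional submanifold, and that the map $\mathcal A$ sending such a $B$ to the triple $(\lambda_1,\lambda_2,\lambda_3)$ of eigenvalues of its Wintner--Conley endomorphism (up to the factor $-1/2$) is a diffeomorphism onto a neighborhood of the diagonal point $(\lambda_0,\lambda_0,\lambda_0)$, the central configuration corresponding to $\lambda_1=\lambda_2=\lambda_3$. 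Note that working with the intrinsic inertia $B$ has already quotiented out the rotations.

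Next I would account for scaling. Because $\Phi(s)=\mathcal G s^{-1/2}$ is homogeneous, scaling the configuration by $k>0$ multiplies every $\lambda_i$ by the common factor $k^{-3}$; thus scaling of configurations is intertwined by $\mathcal A$ with the diagonal $\R_{>0}$-action $(\lambda_1,\lambda_2,\lambda_3)\mapsto t(\lambda_1,\lambda_2,\lambda_3)$ on the target. Since the eigenvalues $\lambda_i$ are all strictly positive (the force being attractive, $2A$ has strictly negative spectrum), one may normalize, e.g. by $\lambda_1+\lambda_2+\lambda_3=\text{const}$; the quotient of the balanced configurations by rotation and scaling is then diffeomorphic to an open $2$-simplex, a $2$-dimensional manifold having the image of the tetrahedron as a distinguished interior point. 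This establishes the first assertion.

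For the second assertion I would identify the configurations admitting a relative equilibrium motion in $\R^4$. Here $d=3$, so the generic minimal ambient dimension is $2d=6$, and by the invariant-subspace analysis of section \ref{dimMin} it drops to $2(d-1)=4$ precisely when $A_B$ (which, $B$ being of maximal rank, is the full $3\times 3$ endomorphism $A$) has a double eigenvalue, that is when two of $\lambda_1,\lambda_2,\lambda_3$ coincide. In the coordinates $(\lambda_1,\lambda_2,\lambda_3)$ the degeneracy locus is therefore the union of the three hyperplanes $\{\lambda_1=\lambda_2\}$, $\{\lambda_2=\lambda_3\}$, $\{\lambda_1=\lambda_3\}$, each of which contains the diagonal line. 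Intersecting with the normalizing simplex, these three planes become the three medians of the triangle, i.e. three curves meeting exactly at its centroid, the point representing the regular tetrahedron. Since $\mathcal A$ is a diffeomorphism, each median is the diffeomorphic image of a smooth regular hypersurface, hence a regular curve, and away from the centroid each carries genuine two-frequency (non-periodic) relative equilibria in $\R^4$, in accordance with Proposition \ref{NonGen} for $p=2$.

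The hard part will be the passage to the scaling quotient together with the regularity of the three curves at their common point: since all three hyperplanes contain the scaling direction, one must rule out that their images degenerate or collapse into fewer curves at the centroid. This is settled by observing that each hyperplane is a genuine plane through the scaling axis, so it projects to a regular line in the simplex, and that the three planes meet pairwise only along the diagonal, so their tangent lines at the centroid are pairwise distinct; the curves are thus genuinely three, regular, and crossing at the tetrahedron, which is exactly the triple coincidence $\lambda_1=\lambda_2=\lambda_3$.
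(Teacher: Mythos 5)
Your proposal is correct and takes essentially the same route as the paper, which deduces the corollary directly from Corollary \ref{4bodyGenType} and the eigenvalue parametrization of Proposition \ref{PropGenType}, with the three curves arising as the quotients by scaling of the three planes $\lambda_i=\lambda_j$ through the diagonal. The paper only sketches this deduction and then supplements it with explicit computations (sections \ref{Sym} and \ref{Trip}); your write-up fills in the same argument, including the homogeneity/scaling bookkeeping, in more detail.
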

\noindent In what follows we give a direct proof of corollary \ref{exact3}, computing in particular the curves of degenerate balanced configurations, first in case 2 masses are equal (section \ref{Sym}), then, at first order, in the general case (section \ref{Trip}).

\subsection{The Wintner-Conley matrix}\label{WCgeneral}  
A tedious but straightforward computation gives the following expression of the Wintner-Conley endomorphism in this basis:

$$
A=\begin{pmatrix}
\alpha&\phi&\epsilon\\
\phi&\beta&\delta\\
\epsilon&\delta&\gamma
\end{pmatrix},\quad\quad\hbox{where (with the notations of \ref{SubIn})}$$

\begin{equation*}
\left\{
\begin{split}
\alpha&=X\left[m_2\bigl(m_1\varphi(b'')+m_3\varphi(d'')\bigr)+
m_4\bigl(m_1\varphi(b')+m_3\varphi(d')\bigr)\right],\\
\beta&=(m_1+m_3)\varphi(a)+Y\left[m_2\bigl(m_3\varphi(b'')+m_1\varphi(d'')\bigr)+
m_4\bigl(m_3\varphi(b')+m_1\varphi(d')\bigr)\right],\\
\gamma&=(m_2+m_4)\varphi(f)+Z\left[m_1\bigl(m_2\varphi(b')+m_4\varphi(b'')\bigr)+
m_3\bigl(m_2\varphi(d')+m_4\varphi(d'')\bigr)\right],\\
\delta&=V\left[\varphi(d'')-\varphi(d')+\varphi(b')-\varphi(b'')\right],\\
\epsilon&=U\left[m_1\bigl(\varphi(b')-\varphi(b'')\bigr)
+m_3\bigl(\varphi(d')-\varphi(d'')\bigr)\right],\\
\phi&=T\left[m_2\bigl(\varphi(b'')-\varphi(d'')\bigr)
+m_4\bigl(\varphi(b')-\varphi(d')\bigr)\right].
\end{split}
\right.
\end{equation*}
\noindent For the regular tetrahedron with unit sides, we find $A=M\varphi(1)Id=-\frac{M}{2}Id$.
\goodbreak
 
\subsection{The equations of balanced configurations}\label{CE}
With the above notations, the 4 equations of balanced configurations take the following form, where one checks that, in accordance with
section \ref{Balcon}, they satisfy the relation
$P_{123}-P_{124}+P_{134}-P_{234}\equiv 0.$
\goodbreak

\begin{equation*}
(P_{123})\left\{
\begin{split}
&m_1(d''-a-b'')[\varphi(a)-\varphi(b'')]-m_4(d''+b')[\varphi(f)-\varphi(d')]\\
+&m_2(a-b''-d'')[\varphi(b'')-\varphi(d'')]-m_4(a+f)[\varphi(d')-\varphi(b')]\\
+&m_3(b''-d''-a)[\varphi(d'')-\varphi(a)]-m_4(b''+d')[\varphi(b')-\varphi(f)]=0,
\end{split}
\right.
\end{equation*}
\begin{equation*}
(P_{124})\left\{
\begin{split}
&m_1(f-b'-b'')[\varphi(b')-\varphi(b'')]-m_3(f+a)[\varphi(d'')-\varphi(d')]\\
+&m_2(b'-b''-f)[\varphi(b'')-\varphi(f)]-m_3(b'+d'')[\varphi(d')-\varphi(a)]\\
+&m_4(b''-f-b')[\varphi(f)-\varphi(b')]-m_3(b''+d')[\varphi(a)-\varphi(d'')]=0,
\end{split}
\right.
\end{equation*}
\begin{equation*}
(P_{134})\left\{
\begin{split}
&m_1(d'-b'-a)[\varphi(b')-\varphi(a)]-m_2(d'+b'')[\varphi(d'')-\varphi(f)]\\
+&m_3(b'-a-d')[\varphi(a)-\varphi(d')]-m_2(b'+d'')[\varphi(f)-\varphi(b'')]\\
+&m_4(a-d'-b')[\varphi(d')-\varphi(b')]-m_2(a+f)[\varphi(b'')-\varphi(d'')]=0,
\end{split}
\right.
\end{equation*}
\begin{equation*}
(P_{234})\left\{
\begin{split}
&m_2(d'-f-d'')[\varphi(f)-\varphi(d'')]-m_1(d'+b'')[\varphi(a)-\varphi(b')]\\
+&m_3(f-d''-d')[\varphi(d'')-\varphi(d')]-m_1(f+a)[\varphi(b')-\varphi(b'')]\\
+&m_4(d''-d'-f)[\varphi(d')-\varphi(f)]-m_1(d''+b')[\varphi(b'')-\varphi(a)]=0.
\end{split}
\right.
\end{equation*}

\noindent Let us use these equations to give a direct proof of the first part of corollary \ref{exact3}~: linearized at the regular tetrahedron whose sides have length 1, the equations of balanced configurations take a particularly simple form, {\it independent of the precise form of} $\varphi$:
$$
\varphi'(1)K\begin{pmatrix}
\delta a\\ \delta b'\\ \delta b''\\ \delta d'\\ \delta d''\\ \delta f
\end{pmatrix}
=
\begin{pmatrix}
0\\ 0\\ 0\\ 0
\end{pmatrix},\quad\hbox{where}
$$
$$
K=\begin{pmatrix}
m_3-m_1&0&m_1-m_2&0&m_2-m_3&0\\
0&m_4-m_1&m_1-m_2&0&0&m_2-m_4\\
m_1-m_3&m_4-m_1&0&m_3-m_4&0&0\\
0&0&0&m_3-m_4&m_2-m_3&m_4-m_2
\end{pmatrix}.
$$
The rank of the matrix $K$ is 0 if the four masses are equal, 2 if three of the masses are equal and 3 otherwise. In this last case, its kernel is generated by the three vectors
\begin{equation*}
\left\{
\begin{split}
E_1&=(1,1,1,1,1,1), \\
E_2&=(m_2m_4,m_2m_3,m_3m_4,m_1m_2,m_1m_4,m_1m_3),\\
E_3&=(m_2+m_4,m_2+m_3,m_3+m_4,m_1+m_2,m_1+m_4,m_1+m_3).
\end{split}
\right.
\end{equation*}
As the rank at neighboring points cannot be higher than three because the equations we have used of the set of balanced configurations are not independent, the rank of the matrix is locally constant if no three of the masses are equal.
\goodbreak

\smallskip

\subsection{The $\Z/2\Z$-symmetric case}\label{Sym}
In this section, we suppose that at least two masses are equal, say $m_2=m_4$, which makes pertinent the use of the basis $\{u_1,u_2,u_3\}$ introduced at the beginning of section \ref{Four}. We check directly that in this case the degeneracy of the Wintner-Conley matrix becomes a codimension 1 condition (Corollary \ref{exact3}) among balanced configurations close to the regular tetrahedron. We start with a nice corollary of the fact that the intrinsic inertia matrices of these balanced configurations form a 3-dimensional manifold: 
\begin{corollary}\label{DynSym}
If two masses and not three are equal, any balanced configuration close enough to the regular tetrahedron is symmetric with respect to a plane separating these two masses and containing the other two. 
\end{corollary}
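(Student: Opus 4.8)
The plan is to exploit the $\Z/2\Z$ symmetry that is present precisely because $m_2=m_4$. Let $\tau$ be the involution of the space of squared mutual distances induced by exchanging the two equal bodies $2$ and $4$; in the coordinates $(a,b',b'',d',d'',f)$ it is the linear involution $\tau(a,b',b'',d',d'',f)=(a,b'',b',d'',d',f)$, since relabelling swaps $r_{12}\leftrightarrow r_{14}$ and $r_{32}\leftrightarrow r_{34}$ while fixing $r_{13}$ and $r_{24}$. Its fixed subspace is exactly $\{b'=b'',\ d'=d''\}$, which is the set of configurations symmetric with respect to the plane bisecting the segment joining bodies $2$ and $4$ and containing bodies $1$ and $3$. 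Thus the assertion to be proved is precisely that every balanced configuration near the regular tetrahedron lies in $\mathrm{Fix}(\tau)$. Because $m_2=m_4$, relabelling bodies $2$ and $4$ is a symmetry of the Newtonian problem and merely permutes the four equations $P_{ijk}$; hence the set of balanced configurations is $\tau$-invariant, and the regular tetrahedron $B_0$ (all distances $=1$) is a fixed point of $\tau$.

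Next I would bring in the manifold structure already established. Since no three masses are equal, $B_0$ is of general type (Corollary \ref{4bodyGenType}), so by Corollary \ref{exact3} the balanced configurations close to $B_0$ form a smooth $3$-dimensional submanifold $\mathcal M$ of distance space. Its tangent space at $B_0$ is the kernel of the linearized operator $K$ computed in section \ref{CE}: indeed $\operatorname{rank}K=3$, so $\dim\ker K=3=\dim\mathcal M$, and therefore $T_{B_0}\mathcal M=\ker K=\mathrm{span}(E_1,E_2,E_3)$.

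The only computation is then to check that, when $m_2=m_4$, each generator $E_1,E_2,E_3$ is fixed by $\tau$. This is immediate: $E_1=(1,\dots,1)$ is obviously symmetric, and for $E_2$ and $E_3$ the pairs of coordinates exchanged by $\tau$ are $m_2m_3\leftrightarrow m_3m_4$ and $m_1m_2\leftrightarrow m_1m_4$ (resp. $m_2+m_3\leftrightarrow m_3+m_4$ and $m_1+m_2\leftrightarrow m_1+m_4$), which coincide once $m_2=m_4$. Hence $T_{B_0}\mathcal M\subseteq\mathrm{Fix}(\tau)$; equivalently, the restriction $\tau|_{\mathcal M}$ is a smooth involution fixing $B_0$ whose derivative at $B_0$ is the identity.

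It remains to pass from this infinitesimal statement to the global one, which is the only delicate point. A smooth involution of a manifold whose derivative at a fixed point is the identity is the identity near that point (Bochner linearization; equivalently, $\mathrm{Fix}(\tau|_{\mathcal M})=\mathcal M\cap\mathrm{Fix}(\tau)$ is a submanifold of $\mathcal M$ whose tangent space at $B_0$ is all of $T_{B_0}\mathcal M$, hence open, and being also closed it equals $\mathcal M$ near $B_0$). Therefore $\tau$ fixes every balanced configuration close enough to $B_0$, i.e. each of them is symmetric with respect to the stated plane, which is the claim. The main obstacle is thus not a heavy calculation but the correct combination of three ingredients: the $\tau$-invariance of $\mathcal M$, the exact matching $\dim\mathcal M=\dim\ker K$ that identifies the tangent space with $\mathrm{span}(E_1,E_2,E_3)$, and the local rigidity of an involution whose differential at a fixed point is the identity.
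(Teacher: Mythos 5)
Your argument is correct, but it follows a genuinely different route from the paper's. The paper restricts from the outset to the symmetric slice $\{b'=b'',\,d'=d''\}$: there the four equations $P_{ijk}$ collapse to a single equation in the four variables $(a,b,d,f)$, whose linearization $(m_1-m_3)\delta a+(m_2-m_1)\delta b+(m_3-m_2)\delta d=0$ is a submersion unless all masses are equal; hence the symmetric balanced configurations form a $3$-dimensional submanifold, of the same dimension as the full set of balanced configurations, and the two must therefore coincide locally. You instead work with the full $3$-manifold $\mathcal{M}$ and its tangent space $\ker K=\mathrm{span}(E_1,E_2,E_3)$, check that this kernel is pointwise fixed by the relabelling involution $\tau$ once $m_2=m_4$, and then invoke the rigidity of a smooth involution whose differential at a fixed point is the identity. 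Both arguments ultimately rest on the same dimension count $\operatorname{rank}K=3$; yours has the merit of reusing the explicit kernel vectors $E_1,E_2,E_3$ already computed in section \ref{CE} and of making the group action explicit --- and you rightly flag that mere tangency of $\mathcal{M}$ to $\mathrm{Fix}(\tau)$ would not suffice, so that the involution structure (via Bochner linearization or the totally geodesic description of $\mathrm{Fix}(\tau|_{\mathcal{M}})$) is genuinely needed to pass from the infinitesimal to the local statement. The paper's version avoids any appeal to linearization of involutions and, as a by-product, produces the explicit reduced equation of symmetric balanced configurations, which is then exploited in the remainder of section \ref{Sym}; either proof is acceptable.
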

To distinguish from a purely geometric symmetry, we shall call {\it dynamically symmetric} such a symmetric configuration for which symmetric masses are equal.
\vskip0.1cm

\begin{proof}Supposing $m_2=m_4$, let us consider the naturally associated symmetric configurations, which satisfy  (figure 2)
$$b'=b''=b,\quad d'=d''=d.$$
\begin{center}
\includegraphics[scale=0.8]{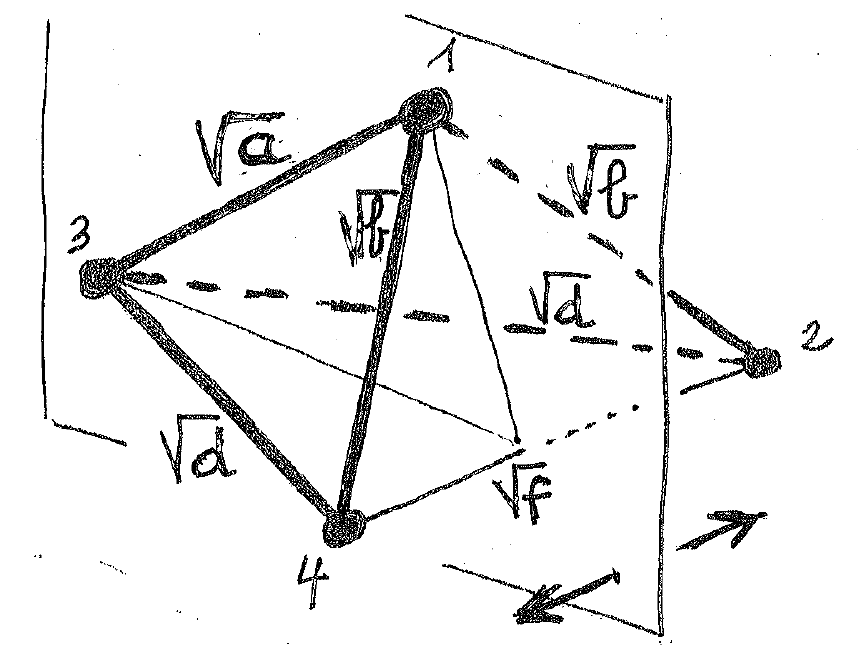}

Figure 2:  symmetric 4-body configurations.
\end{center}
\goodbreak

\noindent The four equations $P_{ijk}$ reduce to a single one in the 4 variables $(a,b,d,f)$. Indeed, 
$P_{234}$ and $P_{124}$ are identically satisfied, while $P_{123}=-P_{134}=0$ becomes
\begin{equation*}
\left\{
\begin{split}
&m_1(d-b-a)[\varphi(b)-\varphi(a)]-m_2(d+b)[\varphi(d)-\varphi(f)]\\
+&m_3(b-a-d)[\varphi(a)-\varphi(d)]-m_2(b+d)[\varphi(f)-\varphi(b)]\\
+&m_2(a-d-b)[\varphi(d)-\varphi(b)]-m_2(a+f)[\varphi(b)-\varphi(d)]=0.
\end{split}
\right.
\end{equation*}
Linearized at the regular tetrahedron, the equation becomes
$$
(m_1-m_3)\delta a+(m_2-m_1)\delta b+(m_3-m_2)\delta d=0.
$$
Hence the equation is a submersion at the regular tetrahedron, except in case all masses are equal. It follows that, under our hypotheses, the set of symmetric balanced configurations  in the neighborhood of the regular tetrahedron  is  a submanifold
whose dimension is the same as the one of all balanced configurations. Hence the two coincide locally.
\end{proof}
\smallskip

\noindent With the same proof, based on a dimension count, we get
\begin{corollary}\label{losanges} If the masses form two equal pairs, any balanced configuration close enough to the regular tetrahedron is a {\rm rhombus configuration}, i.e. it is symmetric with respect to two orthogonal planes respectively separating two equal masses and containing the other two. Supposing  $m_2=m_4\not=m_1=m_3$, this means that $b'=b''=d'=d''$.  
\end{corollary}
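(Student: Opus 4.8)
The plan is to copy the strategy of the proof of Corollary~\ref{DynSym}, replacing the single reflection symmetry by the two orthogonal ones and arguing again by a dimension count. Assuming $m_1=m_3$ and $m_2=m_4$, I impose the rhombus constraint $b'=b''=d'=d''=:e$, which leaves the three squared distances $(a,e,f)$ as free parameters; geometrically this is exactly the invariance under the two orthogonal reflections, one separating $m_2,m_4$ and fixing $m_1,m_3$, the other separating $m_1,m_3$ and fixing $m_2,m_4$. Thus the rhombus configurations form a $3$-dimensional family, $a$ and $f$ being the two diagonals and $e$ the common side.

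Then I substitute this constraint into the four equations $P_{ijk}=0$. Since $m_2=m_4$ is already assumed, the reduction carried out in Corollary~\ref{DynSym} applies verbatim: $P_{124}\equiv P_{234}\equiv 0$ and $P_{134}=-P_{123}$, so only $P_{123}$ needs to be examined. Setting in addition $m_1=m_3$ and $b=d=e$ in the reduced form of $P_{123}$ displayed in the proof of Corollary~\ref{DynSym}, the last two terms vanish because they carry the factor $\varphi(d)-\varphi(b)=\varphi(e)-\varphi(e)=0$, while the remaining four terms cancel in two pairs. Hence $P_{123}$, and therefore all four equations, vanish identically on rhombus configurations: every rhombus configuration is balanced, so the rhombus balanced configurations constitute the whole $3$-dimensional manifold of rhombi.

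Finally I invoke Corollary~\ref{exact3}: since $m_1=m_3\neq m_2=m_4$ no three masses are equal, so the balanced configurations close to the regular tetrahedron form a $3$-dimensional manifold in the squared-distance coordinates (two dimensions up to scaling, plus the scaling direction $E_1$). The rhombi are a $3$-dimensional submanifold of this manifold, and both contain the regular tetrahedron; a submanifold of full dimension is open in the ambient manifold, so the two coincide on a neighborhood of the tetrahedron. This gives the assertion that every balanced configuration close enough to the regular tetrahedron is a rhombus.

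The only point requiring care is the identical vanishing of $P_{123}$ on rhombi, which is what upgrades the first-order dimension count into a genuine coincidence of manifolds. As an independent check at first order, one can note that when $m_1=m_3$ and $m_2=m_4$ the three generators $E_1,E_2,E_3$ of $\ker K$ all have equal entries in the $b',b'',d',d''$ slots, so that $\ker K$ is precisely the tangent space to the rhombus family; this is the infinitesimal form of the coincidence established above, and confirms that no balanced directions transverse to the rhombi can appear.
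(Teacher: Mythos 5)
Your proposal is correct and follows essentially the same route as the paper, whose proof of this corollary is literally ``with the same proof, based on a dimension count'' as Corollary~\ref{DynSym}: the rhombi form a $3$-parameter family on which the single remaining equation $P_{123}=0$ vanishes identically, and this matches the dimension of the full set of balanced configurations near the tetrahedron, forcing the two to coincide locally. Your verification that $P_{123}\equiv 0$ on rhombi and your first-order cross-check via $\ker K$ are exactly the details the paper leaves implicit.
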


\noindent Corollary \ref{DynSym} makes it easy to get a good understanding of the degenerate balanced configurations near the regular tetrahedron when two masses and not three are equal. Indeed, suppose as above that $m_2=m_4,\; b'=b''=b,\;d'=d''=d$. 
In the $\mu^{-1}$-orthonormal basis $\{u_1,u_2,u_3\}$ of $\mathcal D^*$ defined at the beginning of section \ref{Four}, 
{\it the Wintner-Conley matrix decomposes into two blocks:}

\begin{equation*}
A=\begin{pmatrix}
\alpha&\phi&0\\
\phi&\beta&0\\
0&0&\gamma
\end{pmatrix},\quad\hbox{with}\quad
\left\{
\begin{split}
\alpha&=\frac{M}{m_1+m_3}\bigl(m_1\varphi(b)+m_3\varphi(d)\bigr),\\
\beta&=\frac{2m_2}{m_1+m_3}\bigl(m_3\varphi(b)+m_1\varphi(d)\bigr)+(m_1+m_3)\varphi(a),\\
\gamma&=m_1\varphi(b)+m_3\varphi(d)+2m_2\varphi(f),\\
\phi&=\frac{\sqrt{2m_1m_2m_3M}}{m_1+m_3}\bigl(\varphi(b)-\varphi(d)\bigr).
\end{split}
\right.
\end{equation*}
Degeneracy occurs  if  (compare to \ref{EquDeg}, condition $(C_1)$) 
either the upper block degenerates, i.e.  
$$\phi=0,\; \alpha=\beta,$$ 
or the lower right coefficient is equal to an eigenvalue of the upper block, i.e. 
$$\phi^2+\left(\frac{\alpha-\beta}{2}\right)^2-\left(\gamma-\frac{\alpha+\beta}{2}\right)^2=0.$$

\noindent {\it (i)} The first case is equivalent to  $a=b=d$ and the equation of balanced configurations is automatically satisfied.
\smallskip

\noindent {\it (ii)} In order to study the second case, we introduce the following coordinates:
$$\alpha,\quad \theta=\gamma-\frac{\alpha+\beta}{2},\quad \psi=\frac{\alpha-\beta}{2},\quad \phi,$$
that is
$$\beta=\alpha-2\psi,\quad \gamma=\alpha+\theta-\psi.$$
In these coordinates, the degenerate matrices other than the ones defined by $\psi=\phi=0$ are defined by the equation
$$\phi^2+\psi^2-\theta^2=0.$$
On the other hand, the Wintner-Conley matrices of balanced configurations are defined by the equation $P_{124}=P_{234}$, where $a,b,c,d$ are expressed in terms of the new coordinates $\alpha,\theta,\psi,\phi$ via the inverse equations (where for saving space we have noted $s_{13}=m_1+m_3$; thanks to Jacques F\'ejoz for the computation of the inverse matrix),

\begin{equation*}
\left\{\begin{split}
\varphi(a)&=\bigl(\frac{1}{M}-\frac{1}{s_{13}}\bigr)\alpha+\frac{1}{s_{13}}\beta-\frac{m_3-m_1}{s_{13}^2T}\phi 
&&=\frac{1}{M}\alpha-\frac{2}{s_{13}}\psi-\frac{m_3-m_1}{s_{13}^2T}\phi\,,\\
\varphi(b)&=\frac{1}{M}\alpha+\frac{m_3}{2m_2s_{13}T}\phi, 
&&=\frac{1}{M}\alpha+\frac{m_3}{2m_2s_{13}T}\phi\,,\\
\varphi(d)&=\frac{1}{M}\alpha-\frac{m_1}{2m_2s_{13}T}\phi 
&&=\frac{1}{M}\alpha-\frac{m_1}{2m_2s_{13}T}\phi\,, \\
\varphi(f)&=\bigl(\frac{1}{M}-\frac{1}{2m_2}\bigr)\alpha+\frac{1}{2m_2}\gamma
&&=\frac{1}{M}\alpha+\frac{1}{2m_2}\theta-\frac{1}{2m_2}\psi.\\
\end{split}
\right.
\end{equation*}

\noindent We have seen that, if no three masses are equal, the balanced configurations near the regular tetrahedron ($a=b=d=f=l$) form a 3-dimensional manifold whose tangent space at this point is defined by the equation
$$(m_1-m_3)\delta a+(m_2-m_1)\delta b+(m_3-m_2)\delta d=0.$$
On the side of the Wintner-Conley matrices, that is setting
\begin{equation*}
\left\{\begin{split}
\varphi'(l)\delta a&=\frac{1}{M}\delta\alpha-\frac{2}{s_{13}}\delta\psi-\frac{m_3-m_1}{s_{13}^2T}\delta\phi\,,\\
\varphi'(l)\delta b&=\frac{1}{M}\delta\alpha+\frac{m_3}{2m_2s_{13}T}\delta\phi\,,\\
\varphi'(l)\delta d&=\frac{1}{M}\delta\alpha-\frac{m_1}{2m_2s_{13}T}\delta\phi\,, \\
\varphi'(l)\delta f&=\frac{1}{M}\delta\alpha+\frac{1}{2m_2}\delta\theta-\frac{1}{2m_2}\delta\psi,\\
\end{split}
\right.
\end{equation*}
this equation becomes
$$
-\frac{2(m_1-m_3)}{m_1+m_3}\delta\psi+\frac{3m_2(m_1^2+m_3^2)-2m_1m_3(m_1+m_2+m_3)}{2m_2(m_1+m_3)^2T}\delta\phi=0.
$$
\goodbreak 

\noindent It defines a linear susbspace ``transversal" to the cone defined by the degenerate matrices. Indeed, 
It is enough to look in the space of coordinates $(\theta,\psi,\phi)$ obtained by going to the quotient by the $\alpha$ axis : in the coordinates $(\alpha,\beta,\gamma,\phi)$, this corresponds to going to the quotient by the line generated by $(1,1,1,0)$, that is by the addition to the Wintner-Conley matrix of a multiple of the identity, which does not change its degeneracy type and leaves invariant the linearized equations of balanced configurations  (see  section \ref{Trip}). 
Hence we have identified the three (projective) directions of bifurcation to relative equilibria in $\R^4$ garanteed by corollary \ref{exact3}\smallskip

\begin{center}
\includegraphics[scale=0.5]{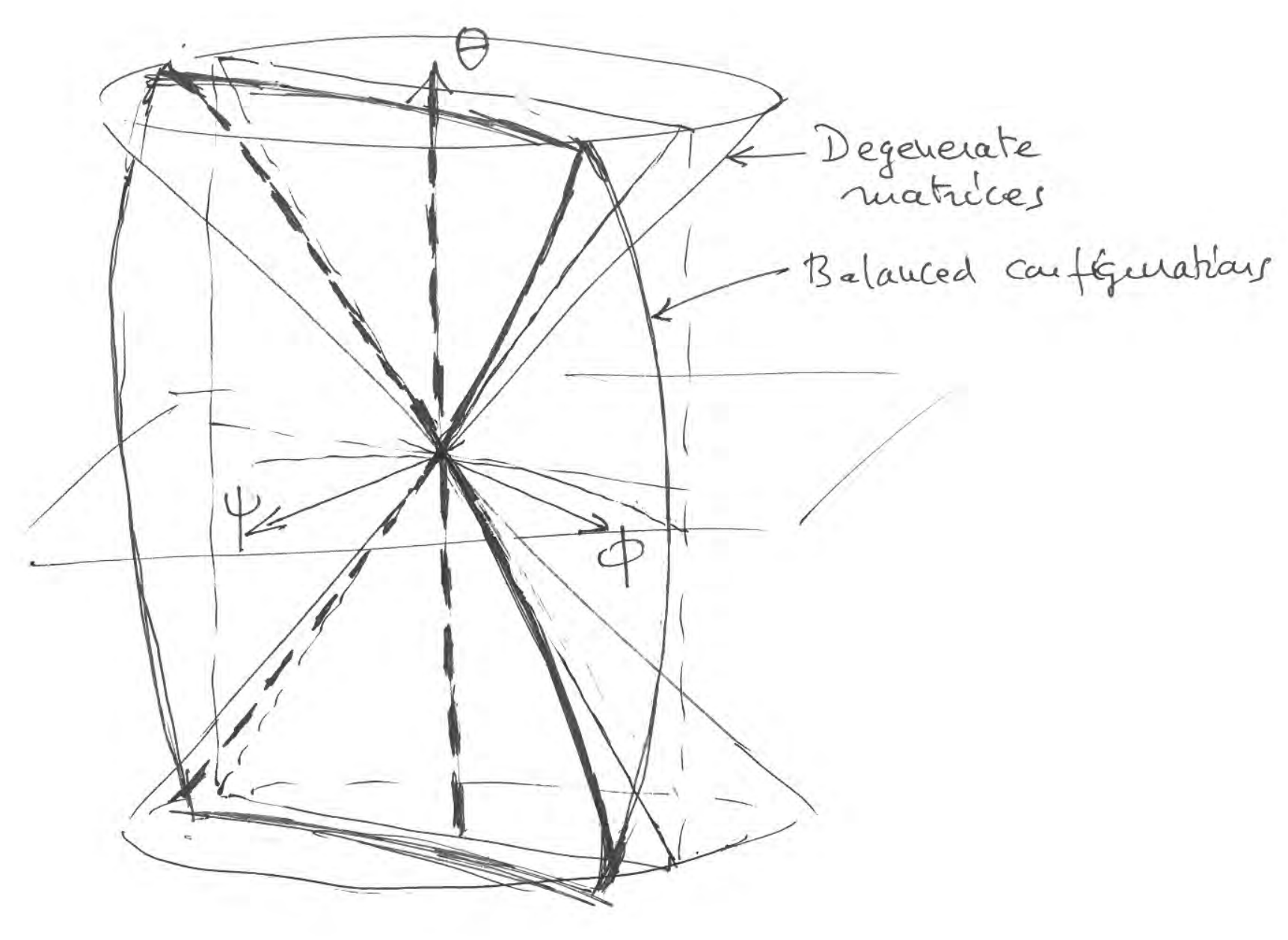}

Figure 3:  Degenerate balanced configurations in the symmetric case.
\end{center}

\noindent {\bf Remark.} The degenerate rhombus configurations are explicitely given by one of the following three equations:
$$a=b,\quad \hbox{or}\quad f=b,\quad\hbox{or}\quad (m_1-m_2)\varphi(b)=m_1\varphi(a)-m_2\varphi(f).$$

\subsection{The case of three equal masses}\label{3=}
\noindent Suppose for example that $m_2=m_3=m_4$; we have the following explicit (projective) curves of 
degenerate symmetric balanced configurations:
\smallskip

{\it 1) The ($\Z/2\Z$-symmetric) configurations:}
we have seen in section \ref{Sym} that, supposing only $m_2=m_4$, the configurations such that $a=b'=b''=d'=d''$
are balanced and degenerate. In the basis $\{u_1,u_2,u_3\}$, their Wintner Conley matrix is
$$diag\bigl(M\varphi(a),M\varphi(a),(m_1+m_3)\varphi(a)+2m_2\varphi(f)\bigr).$$
From the two other equalities $m_2=m_3$ (resp. $m_3=m_4$), one gets two other families of $\Z/2\Z$-symmetric degenerate balanced configurations, namely the configurations such that $a=b'=b''=d'=f$ (resp. $a=b'=b''=d''=f$).

{\it 2) The ($\Z/3\Z$-symmetric) configurations:} if $a=b'=b'',\; d'=d''=f$
 and assuming only $m_2=m_4$, the unique equation of balanced configurations reduces to $a(m_2-m_3)\bigl(\varphi(a)-\varphi(f)\bigr)=0$.
Supposing moreover $m_2=m_3$, it is natural to choose the following $\mu^{-1}$-orthonormal basis of $\mathcal D^*$:

\begin{equation*}
\begin{split}
v_1&=\sqrt{\frac{m_1m_2}{3(m_1+3m_2)}}(-3,1,1,1),\;  v_2=\sqrt{\frac{m_2}{6}}(0,-1,2,-1),\\
v_3&=\frac{m_2}{2}(0,1,0,-1).
\end{split}
\end{equation*}
In this base, the Wintner-Conley matrix is
$$diag\bigl((m_1+3m_2)\varphi(a),m_1\varphi(a)+3m_2\varphi(f),m_1\varphi(a)+3m_2\varphi(f)\bigr).$$
\smallskip

\noindent {\bf Remarks.} 
1) When three and not four masses are equal, non symmetric planar balanced configurations do exist: the idea, communicated to me by Alain Albouy is to start with the central configuration consisting in an isosceles triangle with the fourth mass a little above the center of mass and break the symmetry by slightly changing one mass in the basis of the isosceles triangle. It is not known whether non-symmetric balanced configurations exist in the neighborhood of the regular tetrahedron with three equal masses. 

\smallskip

 2) The case of four equal masses is studied in \cite{C3}. The set of symmetric balanced configurations is singular at the regular tetrahedron: there are four symmetry planes; fixing one of them, the subset of balanced configurations with this symmetry plane is the union of two (projective) surfaces with transversal intersection: one of these consists in the rhombus configurations and contains three (projective) curves of degenerate configurations; the other one contains the curve of  $\Z/3\Z$-symmetric degenerate configurations. It is not known whether non-symmetric 4-body balanced configurations exist in the case of equal masses.

\subsection{The general case: TRIP comes into play}\label{Trip}

We have seen in section \ref{CE} that, as soon as $\varphi'(1)\not=0$, the linearization at the regular tetrahedron of the equations of balanced configurations takes the form
$$
K\begin{pmatrix}
\delta a\\ \delta b'\\ \delta b''\\ \delta d'\\ \delta d''\\ \delta f
\end{pmatrix}
=
\begin{pmatrix}
0\\ 0\\ 0\\ 0
\end{pmatrix},\quad\hbox{where $K$ is defined in \ref{CE}.}
$$
We suppose that the masses are all different, hence the rank of $K$ equals 3 and its kernel is generated by the three vectors $E_1,E_2,E_3$ defined in section \ref{CE}.
\goodbreak

\noindent Computing a Taylor expansion of $A$ in the neighborhood of the regular tetrahedron with unit sides, one gets
$$
A=-\frac{M}{2}
\begin{pmatrix}
1&0&0\\ 0&1&0\\ 0&0&1
\end{pmatrix}
+
\begin{pmatrix}
\delta\alpha&\delta\phi&\delta\epsilon\\
\delta\phi&\delta\beta&\delta\delta\\
\delta\epsilon&\delta\delta&\delta\gamma
\end{pmatrix},\quad\hbox{with}\quad \begin{pmatrix}\delta\alpha\\ \delta\beta\\ \delta\gamma\\ \delta\delta\\ \delta\epsilon\\ \delta\phi
\end{pmatrix}=\varphi'(1)L\begin{pmatrix}
\delta a,\\ \delta b'\\ \delta b''\\ \delta d'\\ \delta d''\\ \delta f
\end{pmatrix},
$$
$$
\hbox{and}\quad L=\begin{pmatrix}
0&Xm_1m_4&Xm_1m_2&Xm_3m_4&Xm_2m_3&0\\
m_1+m_3&Ym_3m_4&Ym_2m_3&Ym_1m_4&Ym_1m_2&0\\
0&Zm_1m_2&Zm_1m_4&Zm_2m_3&Zm_3m_4&m_2+m_4\\
0&V&-V&-V&V&0\\ 
0&Um_1&-Um_1&Um_3&-Um_3&0\\
0&Tm_4&Tm_2&-Tm_4&-Tm_2&0
\end{pmatrix},
$$
\noindent where we have used the notations of \ref{WCgeneral}. 
Hence, the tangent space at the regular tetrahedron of the manifold of Wintner-Conley matrices of balanced configurations
is generated by the three vectors in $\R^6$ (coordinates $(\alpha,\beta,\gamma,\delta,\epsilon,\phi)$)

\begin{equation*}
\begin{split}
\hskip-2.5cm 
L(E_1)=M\begin{pmatrix}
1\\ 1\\ 1\\ 0\\ 0\\ 0
\end{pmatrix},\; 
L(E_2)=
\begin{pmatrix}
4XM\\
m_2m_4\bigl[m_1+m_3+2Y(m_1^2+m_3^2)\bigr]\\
m_1m_3\bigl[m_2+m_4+2Z(m_2^2+m_4^2)\bigr]\\
V(m_2-m_4)(m_3-m_1)\\
2Um_1m_3(m_2-m_4)\\
2Tm_2m_4(m_3-m_1)
\end{pmatrix},\\
L(E_3)=
\begin{pmatrix}
2X\sum_{i<j<k}m_im_jm_k\\
(m_1+m_3)(m_2+m_4)+\frac{m_2+m_4}{m_1+m_3}(m_1^2+m_3^2)+2m_2m_4\\
(m_2+m_4)(m_1+m_3)+\frac{m_1+m_3}{m_2+m_4}(m_2^2+m_4^2)+2m_1m_3\\
0\\
U(m_1+m_3)(m_2-m_4)\\
T(m_2+m_4)(m_3-m_1)
\end{pmatrix}.
\end{split}
\end{equation*}

\noindent{\bf Remark.} The fact that $E_1$ belongs to the kernel of the linearized equations $K$ of balanced configurations can be seen directly from the equation $[A,B]=0$. Indeed, the linearized equation at a central configuration, $[A,\Delta B]+[\Delta A,B]=0$, is satisfied because $A$ is a multiple of Identity as is $\Delta A=L(E_1)$. 
\smallskip

\noindent As the equations of degenerate quadratic forms are also invariant by the addition of a multiple of  the identity matrix, that is  a multiple of $L(E_1)$ in the $(\alpha,\beta,\gamma,\delta,\epsilon,\phi)$ space, in order to understand the tangent spaces at the regular tetrahedron to the degenerate balanced configurations, it is enough to substitute $L(E_2)+xL(E_3)$ 
to $(\alpha,\beta,\gamma,\delta,\epsilon,\phi)$ in the three equations $(C_2)$ of section \ref{EquDeg} (after replacing $u,v,w,x,y,z$ respectively by $\alpha,\beta,\gamma,\delta,\epsilon,\phi$). and this is what I had asked Jacques Laskar to do. 
\smallskip

\noindent Using the computer algebra software TRIP developped by him and Micka\"el Gastineau\cite{GL}, he discovered that the three equations became proportional, namely he obtained
\begin{equation*}
\left\{
\begin{split}
&\frac{(m_1-m_3)(m_2-m_4)}{(m_1+m_3)(m_2+m_4)}\sqrt{\frac{m_1m_2m_3m_4}{(m_1+m_3)(m_2+m_4)}}\times  \cdots\\
&\quad\quad\quad\cdots 2M(-m_3m_2m_4-m_1m_2m_4+m_1m_2m_3+m_1m_3m_4)E=0,\\
&-\frac{(m_1-m_3)^2(m_2-m_4)}{m_1+m_3}m_1m_3\sqrt{\frac{Mm_2m_4}{m_1+m_3}}E=0,\\
&-\frac{(m_1-m_3)(m_2-m_4)^2}{m_2+m_4}m_2m_4\sqrt{\frac{Mm_1m_3}{m_2+m_4}}E=0,
\end{split}
\right.
\end{equation*}
where
$$E(x):=(\sum_i m_i)x^3+2(\sum_{i<j}m_im_j)x^2+3(\sum_{i<j<k}m_im_jm_k)x+4\prod_i m_i=0.$$
The equation $E$ has three real roots, all negative. Indeed, if we set
\begin{equation*}
\begin{split}
F(y)&=(1+m_1y)(1+m_2y)(1+m_3y)(1+m_4y)\\
&= (\prod_i m_i)y^4+(\sum_{i<j<k}m_im_jm_k)y^3+(\sum_{i<j}m_im_j)y^2+(\sum_i m_i)y+1,
\end{split}
\end{equation*}
we have
$$F'(y)= 4(\prod_i m_i)y^3+3(\sum_{i<j<k}m_im_jm_k)y^2+2(\sum_{i<j}m_im_j)y +(\sum_i m_i),$$
and hence
$$E(x)=x^3F'(1/x).$$
One concludes because $F$ has the 4 real roots $-1/m_1,-1/m_2,-1/m_3,-1/m_4$, all negative.
This computation gives the tangents to the three (projective) curves of degenerate balanced configurations which intersect at the  regular tetrahedron.  As I already said in the abstract, this came as a surprise as I had asked Jacques to show that, for masses without any symmetry, no other solution than the regular tetrahedron existed locally, in accordance with the generic crossing of eigenvalues of symmetric matrices being of codimension two. This surprise was the incentive to prove proposition \ref{PropGenType} and corollary \ref{exact3}. 
\medskip

\noindent {\bf Remark.} The equation $E(x)=0$ remains pertinent in the symmetric case studied in section \ref{Sym}, where $m_2=m_4,\; b'=b'',\; d'=d''$,\; (hence  $\delta=\epsilon=0$), and no three masses are equal :  the polynome $F$ has a double root $-1/m_2=-1/m_4$, hence one of the roots of $E$ is $-m_2=-m_4$. The corresponding sides
$$E_2-m_2E_3=(-m_2^2,-m_2^2,-m_2^2,-m_2^2,-m_2^2,m_1m_3-m_1m_2-m_2m_3),$$
are such that $a=b=d$, hence 
their Wintner-Conley matrix is diagonal with $\alpha=\beta$. This was the first case in the study of \ref{Sym}, and the corresponding direction satisfies the equations of balanced configurations (and not only at the first order). 
\smallskip

\noindent In the even more special case of rhombus relative equilibria, that is when $$m_2=m_4\not= m_1=m_3, \quad b'=b''=d'=d'',$$
the three roots 
$$-m_1,\quad -m_2,\quad -\frac{2m_1m_2}{m_1+m_2},$$
of the equation $E(x)=0$ give back the three directions of degeneracy of the Wintner-Conley matrix; indeed:
\begin{equation*}
\left\{
\begin{split}
&\hbox{if}\; x=-m_1,\quad &&a=m_2^2-2m_1m_2,\quad b=f=-m_1^2,\\
&\hbox{if}\; x=-m_2,\quad &&a=b=-m_2^2,\quad f=m_1^2-2m_1m_2,\\
&\hbox{if}\; x=-\frac{2m_1m_2}{m_1+m_2},\quad &&a=\frac{m_2^3-3m_1m_2^2}{m_1+m_2},\; b=-m_1m_2,\; f=\frac{m_1^3-3m_1^2m_2}{m_1+m_2}.
\end{split}
\right.
\end{equation*}
The first two cases correspond to actual lines of degeneracy of the Wintner-Conley matrix, while in the third case, $(m_1-m_2)b=m_1a-m_2f$ gives only the tangent to the actual degeneracy curve $(m_1-m_2)\varphi(b)=m_1\varphi(a)-m_2\varphi(f)$. 
\vskip0.2cm

\noindent In case 3 masses are equal, say $m_2=m_3=m_4:=m,$ one has $E_3=mE_1+\frac{1}{m}E_2$ and, indeed, $E_2$ is the line of degenerate $\Z/3\Z$-symmetric balanced configurations described in section \ref{3=}.
\vskip0.2cm

\noindent Finally, when the four masses are equal, the three vectors $E_1,E_2,E_3$ are proportional and hence they yield only trivial information.

\section{The angular momentum}
To a point $(x,y)$ in the phase space (more accurately the tangent space to the configuration space) of the $n$-body problem, is attached its {\it angular momentum bivector} (\cite{AC,C1,C2})
$$c=-x\circ\mu\circ y^{tr} + y\circ\mu\circ x^{tr} \in\wedge^2E. $$
Transformed to the endomorphism ${\mathcal C}=c\circ\epsilon$ of $E$, it is represented in an orthonormal basis by the antisymmetric matrix
$${\mathcal C}=-XY^{tr}+YX^{tr}.$$
If $Y=\Omega X$ as this is the case for a relative equilibrium motion $X(t)=e^{t\Omega}X(0)$, we get
$${\mathcal C}=-S\Omega^{tr}+\Omega S=S\Omega+\Omega S,$$
where the inertia matrix $S$ of the configuration is defined in \ref{Balcon}.

\subsection{The frequency polytope and its subpolytopes}
{\it We now fix a central configuration $x_0$. In this case, the relative equilibria are all periodic, of the form $x(t)=e^{\tilde\omega Jt}x_0$, where $J$ is a complex structure on $E\equiv\R^{2p}$ (see \ref{dimMin}). By scaling the configuration, one may even assume that the frequency $\tilde\omega$ is equal to 1.} As soon as the dimension of the ambient space is 4 or more, the same central configuration admits a whole family of relative equilibrium motions parametrized by the $J$'s. In case the intrinsic inertia $B$ of the configuration is generic, these motions can be characterized by their angular momentum.
\smallskip

\noindent To the central configuration $x_0=\{\vec r_1,\cdots,\vec r_n\}$ is naturally attached (see\cite{C1,CJ,HZ}) a convex polytope ${\cal P}$ contained in the $(p-1)$ simplex $$\left\{(\nu_1,\cdots,\nu_p)\in(\R_+)^p,\nu_1\ge\cdots\ge\nu_p, \sum_{i=1}^p\nu_i=
\frac{1}{\sum_{i=1}^pm_i}\sum_{1\le i<j\le p}m_im_jr_{ij}^2\right\},$$
where we recall that the $r_{ij}=||\vec r_i-\vec r_j||$ are the mutual distances between the bodies.
This polytope is the set of ordered $p$-tuples $(\nu_1\ge\cdots\ge\nu_p)$ of positive real numbers such that 
$\{\pm i\nu_1,\cdots,\pm i\nu_p\}$ is the spectrum of the $J$-skew-hermitian matrix $SJ+JS$ representing the angular momentum of the relative equilibrium motion of $x_0$ defined by some $J$.  
Once chosen an orthonormal basis of $E\equiv\R^{2p}$, $\cal P$ can be described as the image of the  {\it frequency map}
$${\cal F}:U(p)/SO(2p)\to W_p^+\{(\nu_1,\cdots,\nu_p),\nu_1\ge\cdots\ge\nu_p\}$$
which, to a complex structure $J\in U(p)/SO(2p)$, that is to an identification of $E$ with $\C^p$ such that the multiplication by $i$ is an isometry, associates the ordered spectrum of the $J$-hermitian matrix $J^{-1}S_0J+S_0$, where $S_0$ 
is the inertia matrix of the chosen configuration. Up to some zeroes coming from the difference in dimensions, the inertia $S_0=X_0X_0^{tr}$ and the intrinsic inertia $B_0=X_0^{tr}X_0$ have the same spectrum. Recall in particular that
their common trace is the moment of inertia with respect to the center of mass; by a formula of Leibniz, it is equal to 
$$I_0=trace\; S_0=\frac{1}{\sum_{i=1}^pm_i}\sum_{1\le i<j\le p}m_im_jr_{ij}^2.$$
As it depends only on the inertia matrix $S_0$ of the configuration, the polytope ${\cal P}$ is  defined as well for any solid body,
\smallskip

\noindent It was proved in \cite{C1,CJ} that ${\cal P}$ is a {\it Horn polytope}, more precisely that if $\{\sigma_1\ge\cdots\ge\sigma_{2p}\}$ is the ordered spectrum of $S_0$, ${\cal P}$ is the set of ordered spectra of real symmetric $p\times p$ matrices of the form $c=a+b$, where
$$\hbox{spectrum}\,(a)=\{\sigma_1,\sigma_3,\cdots,\sigma_{2p-1}\},\quad \hbox{spectrum}\,(b)=\{\sigma_2,\sigma_4,\cdots,\sigma_{2p}\}.$$  
Choosing other partitions $\Pi$ of the spectrum of $S_0$ into two subsets with $p$ elements, one defines in the same way Horn polytopes ${\cal P}_\Pi$ which turn out to be subpolytopes of ${\cal P}$ (this is a non trivial fact\footnote{Intuitively, each piece of the partition defining ${\mathcal P}$ is as ``separated" as possible; in contrast, in corollary \ref{geve} the polytope associated to the partition  $\{\nu_1,\nu_2,\cdots,\nu_d\}\sqcup\{0,0,\cdots,0\}$ is reduced to a point.} which is proved in \cite{FFLP}). It was noticed in \cite{C1} that, given some (periodic) relative equilibrium of a central configuration, a bifurcation to a family of (quasi-periodic) relative equibria of balanced configurations can occur only if the corresponding point in the frequency polytope ${\cal P}$ lies in some face of one of these subpolytopes  ${\cal P}_\Pi$. We are interested in identifying the faces which actually correspond to such bifurcations.

\subsection{The generic bifurcation vertex}
From proposition \ref{extrinsic} we deduce 
\begin{corollary}\label{geve}
In the situation of Proposition \ref{extrinsic}, the ordered frequencies of the angular momentum of $x(t)$ are $(\nu_1=\sigma_1\ge\nu_2=\sigma_2\ge\cdots\ge\nu_d=\sigma_d)$. They correspond to a vertex on the boundary of the frequency polytope \cite{C1,CJ}.
\end{corollary}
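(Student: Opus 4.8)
The plan is to read off the claim directly from the structure established in Proposition \ref{extrinsic} together with the definition of the frequency map. Recall that Corollary \ref{geve} concerns the situation of Proposition \ref{extrinsic}: a central configuration $x_0$ of dimension $d$ in a space $E$ of dimension $2d$, whose periodic relative equilibrium $x(t)=e^{\omega Jt}x_0$ is the origin of a one-parameter family of quasi-periodic relative equilibria with $d$ frequencies. Proposition \ref{extrinsic} tells us that the hermitian structure $J$ is then \emph{basic and of extrinsic type}, meaning that $J$ pairs each eigenvector $\rho_k$ of $S_0$ lying in $\mathrm{Im}\,x_0$ with a vector $v_k$ in the orthogonal complement $(\mathrm{Im}\,x_0)^\perp$. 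So the first step is simply to invoke this decomposition.

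Next I would compute the angular momentum frequencies directly in the eigenbasis dictated by the extrinsic pairing. The frequency map sends $J$ to the ordered spectrum of $J^{-1}S_0J+S_0$. In the extrinsic basis the inertia $S_0$ is diagonal, with the nonzero eigenvalues $\sigma_1,\dots,\sigma_d$ sitting on the $\rho_k$ and zeros on the orthogonal vectors $v_k$. Because $J$ interchanges (up to sign) each pair $\{\rho_k,v_k\}$, the conjugate $J^{-1}S_0J$ is again diagonal in the same basis but with the eigenvalue $\sigma_k$ now carried by $v_k$ and $0$ carried by $\rho_k$. Restricting to a complex line spanned by the pair $\{\rho_k,v_k\}$, the operator $J^{-1}S_0J+S_0$ acts as the scalar $\sigma_k+0=\sigma_k$. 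Hence the spectrum of $J^{-1}S_0J+S_0$, viewed as a $J$-hermitian operator, is exactly $\{\sigma_1,\dots,\sigma_d\}$, so the ordered frequencies are $\nu_k=\sigma_k$ for $k=1,\dots,d$, which is the asserted identity.

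The final step is to identify this point as a vertex of the boundary of the frequency polytope $\mathcal P$. Here I would appeal to the Horn description recalled just above the corollary: $\mathcal P$ is the set of ordered spectra of sums $a+b$ where $a$ carries the odd-indexed and $b$ the even-indexed eigenvalues $\sigma_j$ of $S_0$ (with $2p$ eigenvalues in general). In our extrinsic situation $S_0$ has exactly $d$ nonzero eigenvalues and $d$ zeros, so the spectrum of $S_0$ on $E=\R^{2d}$ is $\{\sigma_1,0,\sigma_2,0,\dots,\sigma_d,0\}$ (interleaved by the pairing); the partition associated with $\mathcal P$ then splits these as $\{\sigma_1,\dots,\sigma_d\}$ and $\{0,\dots,0\}$, exactly the degenerate partition mentioned in the footnote where the corresponding polytope is reduced to a point. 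The outcome $\nu_k=\sigma_k$ is the componentwise sum of an eigenvalue with $0$, which is the \emph{extreme} (Weyl-type) alignment of the two spectra; such a componentwise-sorted sum is well known to be a vertex of the Horn polytope. I would therefore conclude by noting that the extrinsic pairing forces precisely the partition realizing a vertex of $\mathcal P$.

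The main obstacle is the last step: justifying cleanly that the point $(\sigma_1,\dots,\sigma_d)$ is genuinely a \emph{vertex} (an extreme point) of $\mathcal P$ rather than merely a boundary point. The computation of the frequencies is essentially immediate from Proposition \ref{extrinsic}, but the vertex claim requires either citing the explicit vertex description of Horn polytopes from \cite{C1,CJ} or arguing that the fully-aligned (sorted) sum is extremal. I expect the intended argument is to recognize that the sorted sum of the two spectra is the unique minimal element in the majorization order realized by $\mathcal P$ and hence an extreme point; making that precise, or pinning down exactly which result of \cite{C1,CJ} supplies the vertex, is where care is needed.
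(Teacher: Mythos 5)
Your computation of the frequencies $\nu_k=\sigma_k+0$ is exactly the paper's first step, which it likewise treats as immediate from the extrinsic nature of $J$. The genuine gap is in the vertex claim, and it sits precisely where you suspected. You identify the relevant partition as $\{\sigma_1,\dots,\sigma_d\}\sqcup\{0,\dots,0\}$, but that is the degenerate partition $\Pi_0$ of the footnote, \emph{not} the partition defining the frequency polytope $\mathcal P$: by the Horn description recalled just before the corollary, $\mathcal P$ is the Horn polytope of the odd/even split $\sigma_-=\{\sigma_1,\sigma_3,\dots,0,\dots,0\}$, $\sigma_+=\{\sigma_2,\sigma_4,\dots,0,\dots,0\}$ of the full spectrum $\{\sigma_1,\dots,\sigma_d,0,\dots,0\}$ of $S_0$ on $\R^{2d}$. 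Your ``componentwise sorted sum'' observation therefore only shows that $(\sigma_1,\dots,\sigma_d)$ is the unique point of the subpolytope $\mathcal P_{\Pi_0}$; since a subpolytope reduced to a point could a priori lie in the interior of $\mathcal P$, this does not give the assertion. (Note also that for the odd/even partition the componentwise sorted sum is $(\sigma_1+\sigma_2,\sigma_3+\sigma_4,\dots)$, which is a \emph{different} vertex of $\mathcal P$.)

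The paper closes this gap by working with the odd/even partition directly: it observes that the number of nonzero entries of $\sigma_-$ equals the number of zeros of $\sigma_+$ and vice versa (this holds whether $d$ is odd or even), so there is a permutation coupling each nonzero $\sigma_i$ with a $0$ from the other part of the partition; the corresponding ``commuting'' point of the Horn polytope $\mathcal P$ has spectrum exactly $(\sigma_1,\dots,\sigma_d)$ and is a boundary vertex of $\mathcal P$ by the results of \cite{C1,CJ}. To repair your argument you would need either this counting/coupling step, or an independent proof that the single point of $\mathcal P_{\Pi_0}$ is extremal in $\mathcal P$ (for instance, that it is the dominance-minimal point of $\mathcal P$); as written, the extremality in $\mathcal P$ is asserted but not established.
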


\noindent The identification of the frequencies $\nu_i=\sigma_i+0$ of the angular momentum is an immediate consequence of the the nature of $J$ in the proposition.
It remains to prove that the corresponding vertex is a boundary vertex and not an interior one. It is enough to prove that it is a vertex of the polytope $\mathcal{P}$ associated to the partition $\sigma_-\cup\sigma_+$ of the spectrum of the inertia matrix $S_0$ with  
$\sigma_-=\{\sigma_1,\sigma_3,\cdots,0\cdots,0\}$ and $\sigma_+=\{\sigma_2,\sigma_4,\cdots,0,\cdots,0\}$, where 
$\sigma_1\ge\cdots \sigma_d$ (see \cite{C1,CJ}). 
This comes from the fact that whatever be $d$, odd or even, the number of non zero terms in $\sigma_-$ (resp. $\sigma_+$) is the same as the number of zeroes in $\sigma_+$  (resp. $\sigma_-$); hence there is a vertex of $\mathcal{P}$ which corresponds to a permutation coupling each $\sigma_i$ with a 0.
\medskip

\noindent {\bf Exemple: Three bodies.}
\smallskip

\noindent In the equal mass three-body problem, the bifurcation from an equilateral periodic relative equilibrium of a family of isosceles quasi-periodic relative equilibria in $\R^4$ with  2 frequencies cannot originate from the planar Lagrange solution but only from an equilateral relative equilibrium whose angular momentum is equivalent to the complex structure~$J_0$. 
\begin{center}
\includegraphics[scale=0.6]{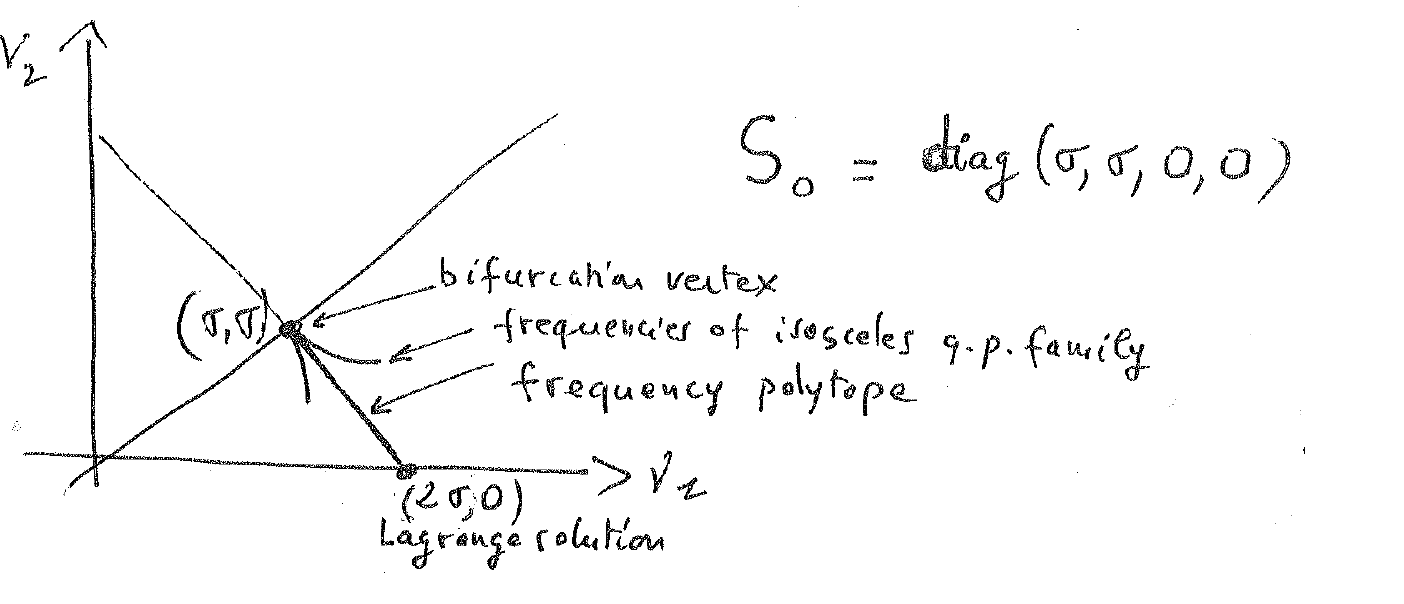}

Figure 4: Bifurcation from periodic equilateral to quasi-periodic isosceles
\end{center}

\subsection{Bifurcation locus in the frequency polytope} 
\noindent Figure 5 depicts the frequency polytope of the regular tetrahedron configuration in $\R^6$. Generically, only two possibilities exist for the inertia eigenvalues $\sigma_1, \sigma_2, \sigma_3$:
$$
(1)\;\, \sigma_1>\sigma_2+\sigma_3>\sigma_2>\sigma_3>0\quad\hbox{or}\quad (2)\;\, \sigma_2+\sigma_3>\sigma_1>\sigma_2>\sigma_3>0.
$$
The first case is what becomes the example depicted in \cite{C1} under the assumption that $\sigma_4=\sigma_5=\sigma_6=0$. An example is the regular tetrahedron with one of the masses much smaller than the three others. Another one is the regular tetrahedron with masses $m_1=m_3>>m_2=m_4$. An example of the second one is the regular tetrahedron with almost equal masses.
\smallskip

\noindent Figure 6 indicates the angular momentum frequencies corresponding to the sizes and vertices when the frequency of the corresponding relative equilibrium equals 1 (if not, all the frequencies should be multiplied by this frequency $\omega$).
\smallskip

\noindent There are 4 distinct partitions $\Pi$ of the spectrum $\{\sigma_1,\sigma_2,\sigma_3,0,0,0\}$ :

1) $\Pi_0=\{\sigma_1,\sigma_2,\sigma_3\}\cup\{0,0,0\}$; the corresponding Horn polytope is reduced to one point, 
the ``generic bifurcation vertex" $A$, which is the only place where a periodic relative equilibrium in $\R^6$ of the given central configuration could bifurcate into a family of quasi-periodic relative equilibria in $\R^6$ with 3 frequencies of balanced configurations with 
$\lambda_1,\lambda_2,\lambda_3$ all distinct\footnote{The notations $A,B,C,D$ for the vertices have, of course, no relation with the matrices $A, B,C$.}. 
\smallskip

2) $\Pi_i=\{\sigma_i,0,0\}\cup\{\sigma_j,\sigma_k,0\},\, i=1,2,3,$ the frequency polytope, which contains all the others, corresponding to $i=2$.  

\noindent The vertex $B$ corresponds to a periodic relative equilibrium motion in $\R^4$ which could bifurcate into a family of quasi-periodic relative equilibria with 2 frequencies  in $\R^4$ of balanced configurations with  $\lambda_1=\lambda_2\not=\lambda_3$.  The eigenplanes of the instantaneous rotation $\Omega$ would tend respectively to 
$\{\rho_1,\rho_2\}$ and $\{\rho_3, v_1\}$, where in agreement with the notations in \ref{dimMin}, $v_1$ is any non-zero vector orthogonal to the image of the balanced configuration in question. 
It follows that the bifurcation happens at the vertex $B$, which corresponds to a relative equilibrium directed by the complex structure having the planes $\{\rho_1,\rho_2\}$ and $\{\rho_3,v_1\}$ as complex lines.

\begin{center}
\includegraphics[scale=0.6]{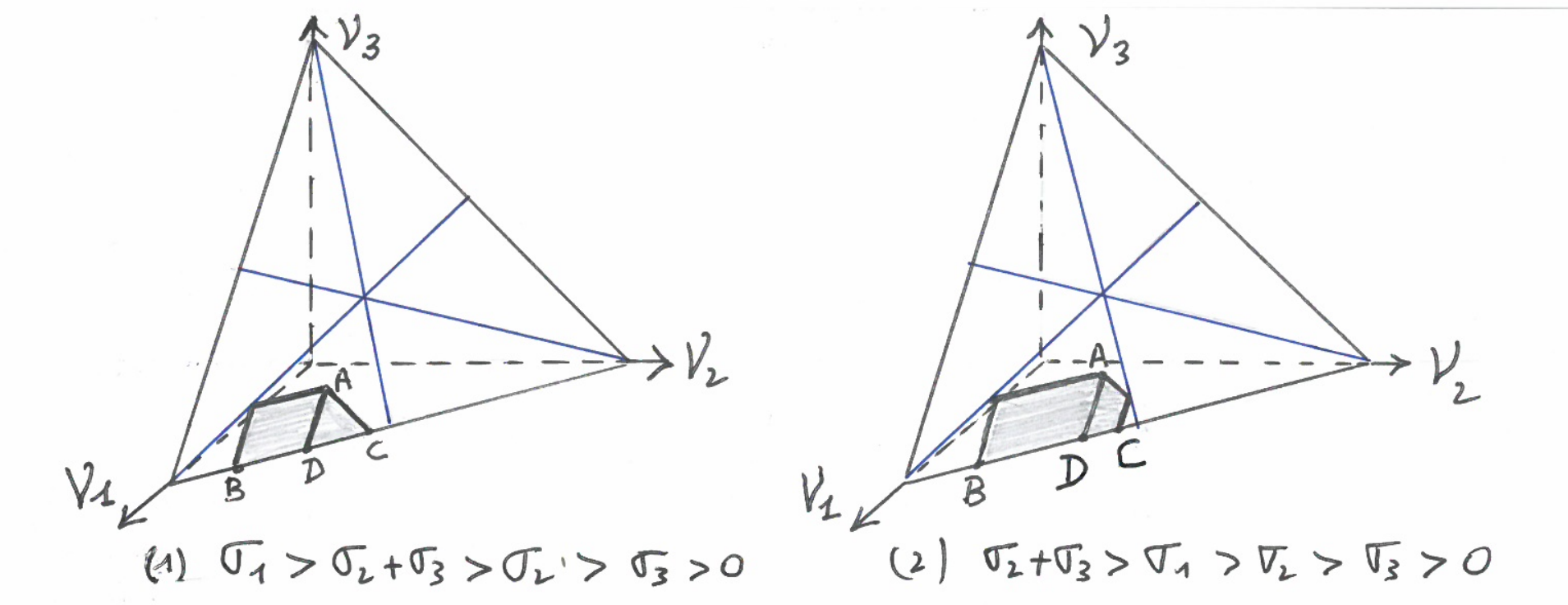}

Figure 5: Bifurcation loci in the generic cases
\end{center}

\begin{center}
\includegraphics[scale=0.6]{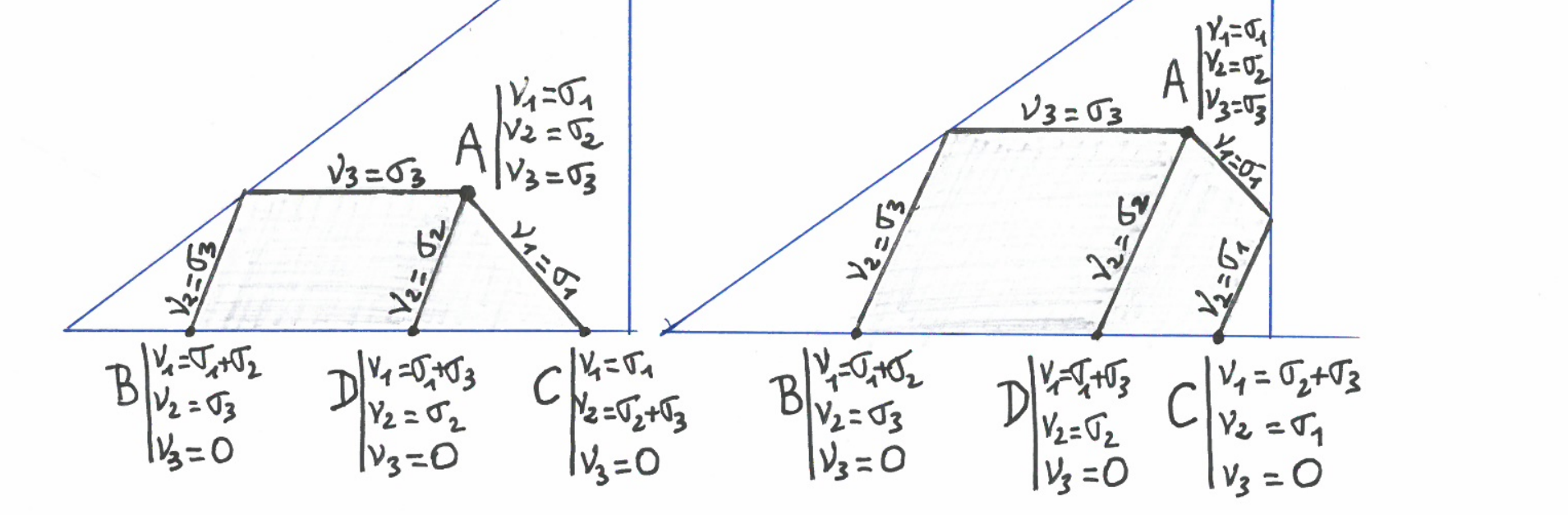}

Figure 6: Angular momentum frequencies (when rotation frequency = 1)
\end{center}

\noindent Analogous descriptions hold for the vertex $C$ ($\lambda_1\not=\lambda_2=\lambda_3$) and the vertex $D$ ($\lambda_1=\lambda_3\not=\lambda_2$). 
\smallskip

\noindent The broken edge $AB$ ($\nu_3=\sigma_3$ reflected in $\nu_2=\sigma_3$) corresponds to periodic relative equilibria in $\R^6$ which could bifurcate into a family of quasi-periodic relative equilibria with 2 frequencies  in $\R^6$ of balanced configurations with  $\lambda_1=\lambda_2\not=\lambda_3$ and whose instantaneous rotation $\Omega$ would have a 4-dimensional eigenspace $\{\rho_1,\rho_2,v_1,v_2\}$ and a 2-dimensional eigenplane $\{\rho_3,v_3\}$. The edge is parametrized by the choice of a complex structure in the 4-dimensional eigenspace.
\smallskip

\noindent In the same way, the (possibly broken) edge $AC$ ($\nu_1=\sigma_1$) corresponds to periodic relative equilibria in $\R^6$ which could bifurcate into a family of quasi-periodic relative equilibria with 2 frequencies  in $\R^6$ of balanced configurations with  $\lambda_1\not=\lambda_2=\lambda_3$ and whose instantaneous rotation $\Omega$ would have a 4-dimensional eigenspace $\{\rho_2,\rho_3,v_2,v_3\}$ and a 2-dimensional eigenplane $\{\rho_1,v_1\}$. The edge is, as above, parametrized by the choice of a complex structure in the 4-dimensional eigenspace.
Finally, the same description holds for the interior side $AD$. 
\smallskip

\noindent On the contrary, apart from the vertices $B,D,C$, the edge $BC$ ($\nu_3=0$) does not correspond to possible bifurcations. This is because it is the interior of the frequency polytope when the dimension of $E$ goes down to 4. This remark indicates in more general situations what faces of the frequency polytope (and subpolytopes) are bifurcation values.
\goodbreak

\section{Rhombus 4-body relative equilibria}
\subsection{The 3d case}
According to corollary \ref{losanges}, any balanced configuration close enough to the regular tetrahedron with only 2 different masses, say $m_1=m_3\not=m_2=m_4$,  is a rhombus configuration :
$$r_{12}=r_{14}=r_{32}=r_{34}=\sqrt{b},\quad r_{13}=\sqrt{a}\quad ,r_{24}=\sqrt{f}.$$
\smallskip

\noindent In such a simple case, it is possible to give explicit formul{\ae} for the bifurcating families.
Given real numbers $(\alpha,\beta,\gamma_1,\gamma_2)$, which we may suppose all positive,  such that $m_1\gamma_1=m_2\gamma_2$, we define a configuration $x_0$ of 4 bodies in $\R^3$ with center of mass at the origin by
$$x_0=
\begin{pmatrix}
\alpha&0&-\alpha&0\\ 
0&\beta&0&-\beta\\ 
\gamma_1&-\gamma_2&\gamma_1&-\gamma_2\\ 
\end{pmatrix}.
$$

The mutual distances are
\begin{equation*} 
\left\{
\begin{split}
\sqrt{a}&=r_{13}=2\alpha,\; \sqrt{f}= r_{24}=2\beta,\\
\sqrt{b}&=\sqrt{d}=r_{12}=r_{14}=r_{32}=r_{34}=(\alpha^2+\beta^2+(\gamma_1+\gamma_2)^2)^{1/2},
\end{split}
\right.
\end{equation*} 
In the $\mu^{-1}$-orthonormal basis $\{u_1,u_2,u_3\}$ of $\mathcal{D}^*$ formed by the vectors
\begin{equation*}
u_1=\sqrt{\frac{m_1m_2}{2(m_1+m_2)}}(1,-1,1-1),\; 
u_2=\sqrt{\frac{m_1}{2}}(1,0,-1,0),
u_3=\sqrt{\frac{m_2}{2}}(0,1,0,-1),
\end{equation*}
$x_0$ is represented by the $3\times 3$ matrix whose columns are Jacobi vectors
$$X_0=
\begin{pmatrix}
0&\sqrt{2m_1}\alpha&0\\ 
0&0&\sqrt{2m_2}\beta\\ 
\sqrt{\frac{2m_1m_2}{m_1+m_2}}(\gamma_1+\gamma_2)&0&0 

\end{pmatrix}.
$$ 
The corresponding  inertia matrices are 
\begin{equation*}
B_0=X_0^{tr}X_0=
\begin{pmatrix}
\sigma_3&0&0\\ 
0&\sigma_1&0\\ 
0&0&\sigma_2
\end{pmatrix},\quad 
S_0=X_0X_0^{tr}=
\begin{pmatrix}
\sigma_1&0&0\\
0&\sigma_2&0\\
0&0&\sigma_3\\
\end{pmatrix},\end{equation*}
with
\begin{equation*}
\left\{\begin{split}
\sigma_1&=2m_1\alpha^2=\frac{m_1}{2}a,\; \sigma_2=2m_2\beta^2=\frac{m_2}{2}f,\\
\sigma_3&=
\frac{2m_1m_2}{m_1+m_2}(\gamma_1+\gamma_2)^2=2(m_1\gamma_1^2+m_2\gamma_2^2)=\frac{m_1m_2}{2(m_1+m_2)}(4b-a-f),
\end{split}
\right.
\end{equation*}
while the Wintner-Conley endomorphism $A:\mathcal{D}^*\to\mathcal{D}^*$ is
$$A=
\begin{pmatrix}
(m_1+m_2)\varphi(b)&0&0\\
0&\bigl(m_1\varphi(a)+m_2\varphi(b)\bigr)&0\\
0&0&\bigl(m_1\varphi(b)+m_2\varphi(f)\bigr)
\end{pmatrix}.$$

\subsubsection{Bifurcations in $\R^6$}
{\it i) Bifurcations from the generic vertex.} We embed the configuration $x$ in $\R^6$ by equaling to 0 the last 3 coordinates of each body and
identify $(q_1,\dots,q_6)\in \R^6$ with $(z_1=q_1+iq_4,\;  z_2=q_2+iq_5,\;  z_3=q_3+iq_6)\in \C^3$.
A relative equilibrium is defined by making each column of $X$ move according to 
$$(Z_1,Z_2,Z_3)\mapsto (e^{i\omega_1t}Z_1,\; e^{i\omega_2t}Z_2, e^{i\omega_3t}Z_3),$$
with 
$$\omega_1^2=-2(m_1+m_2)\varphi(b),\omega_2^2=-2\bigl(m_1\varphi(a)+m_2\varphi(b)\bigr),
\omega_3^2=-2\bigl(m_1\varphi(b)+m_2\varphi(f)\bigr),$$
where
$(Z_1,Z_2,Z_3)\in C^3$ are the columns of $X$, considered as belonging to $\mathcal{D}\otimes\R^6$ (that is with three zeros added). 
Its angular momentum $\mathcal{C}$ is (compare to \cite{C1})
\begin{equation*}
\hskip-1.5cm
\begin{pmatrix}
0&0&0&-\sigma_1\omega_1&0&0\\
0&0&0&0&-\sigma_2\omega_2&0\\
0&0&0&0&0&-\sigma_3\omega_3\\
\sigma_1\omega_1&0&0&0&0&0\\
0&\sigma_2\omega_2&0&0&0&0\\
0&0&\sigma_3\omega_3&0&0&0
\end{pmatrix}.
\end{equation*}

\noindent This gives a 2-parameter\footnote{$a,b,f$ modulo scaling} family bifurcating at the ``generic vertex" 
$$A=\bigl(\frac{m_1}{2},\frac{m_1m_2}{m_1+m_2},\frac{m_2}{2}\bigr)$$ from the regular tetrahedron with unit sides and masses $m_1,m_2,m_1,m_2$ such that $m_1+m_2=1$ (and hence $\omega^2=1$) and $m_1>m_2$.  
\medskip

\noindent {\it 2) Bifurcations from the sides of the frequency polytope.} The relative equilibria bifurcating as above from the generic vertex have 3 frequencies except when one of the equalities $a=b$ or $b=f$ of $(m_1-m_2)\varphi(b)=m_1\varphi(a)-m_2\varphi(f)$ holds, in which case, only two distinct frequencies survive.
The missing frequency is in some sense replaced by the parameter along one side of the frequency polytope (or subpolytope), which corresponds to the latitude of choice (in fact a 2-sphere) of the complex structure which directs the relative equilibrium from which the family bifurcates. Namely, supposing $a=b=1$ and $m_1+m_2=1$, and embedding the configuration $x$ in $\R^6$ via the embedding $(q_1,q_2,q_3)\mapsto (q_1,0,q_2,0,q_3,0)$ of $\R^3$ in $\R^6$, we define a relative equilibrium by setting
$$x(t)=\begin{pmatrix}
\cos\omega t&0&-\cos\theta\sin\omega t&-\sin\theta\sin\omega t&0&0\\
0&\cos\omega t&\sin\theta\sin\omega t&-\cos\theta\sin\omega t&0&0\\
\cos\theta\sin\omega t&-\sin\theta\sin\omega t&\cos\omega t&0&0&0\\
\sin\theta\sin\omega t&\cos\theta\sin\omega t&0&\cos\omega t&0&0\\
0&0&0&0&\cos\omega_3 t&-\sin\omega_3 t&\\
0&0&0&0&\sin\omega_3 t&\cos\omega_3 t
\end{pmatrix}x_0,
$$
where $\omega^2=-2(m_1+m_2)\varphi(b)=1,\, \omega_3^2=-2\bigl(m_1\varphi(b)+m_2\varphi(f)\bigr)=m_1+m_2f^{-\frac{3}{2}}$. Then
$$S_0=\begin{pmatrix}
\sigma_1&0&0&0&0&0\\
0&0&0&0&0&0\\
0&0&\sigma_2&0&0&0\\
0&0&0&0&0&0\\
0&0&0&0&\sigma_3&0\\
0&0&0&0&0&0
\end{pmatrix}$$
and 
$$C=\begin{pmatrix}
0&0&-(\sigma_1+\sigma_2)\omega\cos\theta&-\sigma_1\omega\sin\theta&0&0\\
0&0&\sigma_2\omega\sin\theta&0&0&0\\
(\sigma_1+\sigma_2)\omega\cos\theta&-\sigma_2\omega\sin\theta&0&0&0&0\\
\sigma_1\omega\sin\theta&0&0&0&0&0\\
0&0&0&0&0&-\sigma_3\omega_3\\
0&0&0&0&\sigma_3\omega_3&0
\end{pmatrix},$$
whose frequencies are $\sigma_3\omega_3$ and the square roots of the solutions of $$\nu^2+[\sigma_1^2+\sigma_2^2+2\sigma_1\sigma_2\cos^2\theta]\omega^2\nu+\sigma_1^2\sigma_2^2\omega^4\sin^4\theta=0.$$
At the bifurcation, when $\theta$ varies from $\pi/2$ to $0$, they vary from $(\sigma_1,\sigma_2,\sigma_3)$ to $(\sigma_1+\sigma_2,0,\sigma_3)$, which makes $(\nu_1,\nu_2,\nu_3)$ travel the broken side $AB$ in figure 6.
\smallskip

\noindent{\bf Remark.} In agreement with the results of \cite{C1,CJ}, it is an ``adapted" family of complex structures which has been chosen to direct the relative equilibria of the regular tetrahedron at the bifurcation: they send the plane $\{\rho_1,v_1\}$ onto the plane $\{\rho_2,v_2\}$ and $\rho_3$ onto $v_3$. 

\subsubsection{Bifurcations in $\R^4$}

\noindent {\it i) The case a=b.} This is the limit $\theta=0$ of the above family: the complex structure sends $\rho_1$ to $\rho_2$, $\rho_3$ to $v_3$ (and $v_1$ to $v_2$, which implies that the motion does not visit the corresponding dimensions). 
After embedding the configuration in $\R^4$ by equaling to 0 the last coordinate of each body and identifying
$(q_1,q_2,q_3,q_4)\in\R^4$ with 
$(z_1=q_1+iq_2, z_2=q_3+iq_4)\in\C^2$, such a relative equilibrium motion is defined by each column of $X$ moving according to 
$(Z_1,Z_2)\mapsto(e^{i\omega_1 t}Z_1, e^{i\omega_3 t}Z_2),$
with $\omega_1^2=-2(m_1+m_2)\varphi(b),\; \omega_3^2=-2(m_1\varphi(b)+m_2\varphi(f))$.
Its angular momentum $ \mathcal{C}$ is
$$\begin{pmatrix}
0&-(\sigma_1+\sigma_2)\omega_1&0&0\\
(\sigma_1+\sigma_2)\omega_1&0&0&0\\
0&0&0&-\sigma_3\omega_3\\
0&0&\sigma_3\omega_3&0\end{pmatrix}.$$
\smallskip

\noindent {\it ii) The case $(m_1-m_2)\varphi(b)=m_1\varphi(a)-m_2\varphi(f)$ and the case $b=f$.} 
The situation is analogous, the only difference being the identification of $\R^4$ with $\C^2$ which is respectively
$z_1=q_2+iq_3, z_2=q_1+iq_4$ and $z_1=q_1+iq_3, z_2=q_2+iq_4$. The angular momentum spectra  are respectively
$\bigl\{(\sigma_2+\sigma_3)\omega_1,\; \sigma_1\omega_2\bigr\}$ and $\bigl\{(\sigma_1+\sigma_3)\omega_1,\; \sigma_2\omega_2\bigr\}.$

\subsection{The 2d case}\label{2d} This is the case when $\gamma_1=\gamma_2=0$, that is $4b-a-f=0$. We check property {\bf (H)} (see \ref{general}) for the symmetric ($m_1=m_3,\; b'=b''=b, \;  d'=d''=d$, see \ref{Sym}) balanced configurations in the neighborhood of the planar rhombus central configuration $x_0$. As $Im B_0$ is the plane $x=0$, $x_0$ is characterized by the equality of the last two eigenvalues of $A_0$, that is
$$m_1\varphi(a)-m_2\varphi(f)=(m_1-m_2)\varphi(b)=(m_1-m_2)\varphi(\frac{a+f}{4}).$$

\begin{lemma} Supposing $m_1=m_3$ and $m_2=m_4$, let $x_0$ be a planar rhombus (hence balanced) configuration  and let $K$ be defined by 
$$K=2m_1\bigl(\varphi(a_0)-\varphi(b_0)\bigr)-(m_1+m_2)a_0\varphi'(b_0).$$
If $K\not=0$, the set of planar $\Z{/2\Z}$-symmetric 4-body balanced configurations close to $x_0$ coincide with the set of planar rhombus configurations and the condition {\bf (H)} of section \ref{general} is satisfied at $x_0$ for these planar $\Z/2\Z$-symmetric balanced configurations. 
\end{lemma}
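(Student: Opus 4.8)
The plan is to work throughout in the space of planar $\Z/2\Z$-symmetric configurations, coordinatized by the four squared mutual distances $(a,b,d,f)$ (the symmetry $m_2=m_4$ forces $b'=b''=b$, $d'=d''=d$), and to split the statement into its two assertions. First I would record the relation cutting out the \emph{planar} configurations among these symmetric ones. Placing bodies $1,3$ on the symmetry axis of the plane and $2,4$ as mirror images and computing the squared distances directly yields the single polynomial constraint
$$g(a,b,d,f):=(b-d)^2-a\,(2b+2d-a-f)=0,$$
whose zero set is the $3$-dimensional manifold of planar symmetric configurations, the rhombus configurations $\{b=d\}$ sitting inside it. Introducing $t:=b-d$ and solving $g=0$ for $b+d$ shows that $(a,f,t)$ are smooth coordinates near $x_0$, that the rhombi form the slice $\{t=0\}$ (on which $g=0$ reads $4b=a+f$, i.e.\ $x_0$ is recovered), and that $\partial b/\partial t=\tfrac12$, $\partial d/\partial t=-\tfrac12$ at $x_0$ (where $4b_0=a_0+f_0$).

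For the first assertion I would use that in the symmetric case the four balanced equations collapse to the single equation $P=0$ of section \ref{Sym}, together with the elementary fact behind corollary \ref{losanges} that every rhombus is balanced, i.e.\ $P$ vanishes identically when $b=d$. Restricting $P$ to $\{g=0\}$ gives $\tilde P(a,f,t)$ with $\tilde P(a,f,0)\equiv 0$, hence (Hadamard) a factorization $\tilde P=t\,Q$. The planar symmetric balanced configurations are therefore $\{t=0\}\cup\{Q=0\}$, and they reduce near $x_0$ to the planar rhombi exactly when $Q(x_0)\neq0$, that is when $\partial_t\tilde P|_{x_0}\neq0$. By the values of $\partial b/\partial t,\partial d/\partial t$ above, this derivative equals $\tfrac12\bigl(\partial_b P-\partial_d P\bigr)|_{x_0}$, and carrying out this differentiation of the explicit six-term $P$, simplifying with $b_0=d_0$ and $a_0+f_0=4b_0$, is the main computation; it produces exactly $\partial_t\tilde P|_{x_0}=K$. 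Thus $K\neq0$ is precisely the transversality forcing the planar symmetric balanced set to coincide with the planar rhombi.

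For condition \textbf{(H)} I would use that, once the rank-two (i.e.\ planar) balanced configurations are known to be the planar rhombi, both $B$ and $A$ are diagonal in the basis $\{u_1,u_2,u_3\}$ of section \ref{2d}. Since $\sigma_3=\tfrac{m_1m_2}{2(m_1+m_2)}(4b-a-f)=0$ by planarity, $\mathrm{Im}\,B=\mathrm{span}(u_2,u_3)$ is constant along the family, with $B|_{\mathrm{Im}\,B}=\mathrm{diag}(\tfrac{m_1}{2}a,\tfrac{m_2}{2}f)$ and $A|_{\mathrm{Im}\,B}=\mathrm{diag}\bigl(m_1\varphi(a)+m_2\varphi(b),\,m_1\varphi(b)+m_2\varphi(f)\bigr)$ with $b=(a+f)/4$. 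As $B\mapsto(a,f)$ is already a diffeomorphism, \textbf{(H)} reduces to the nonvanishing of the Jacobian of $(a,f)$ against the two diagonal entries of $A|_{\mathrm{Im}\,B}$; a short computation (the $\varphi'(b)^2$ terms cancel) gives
$$\frac{m_1^2}{4}\varphi'(a)\varphi'(b)+m_1m_2\,\varphi'(a)\varphi'(f)+\frac{m_2^2}{4}\varphi'(b)\varphi'(f),$$
which is strictly positive since $\varphi'(s)=\tfrac34\mathcal{G}\,s^{-5/2}>0$ for the attractive potential. Hence $B\mapsto A|_{\mathrm{Im}\,B}$ is a local diffeomorphism and \textbf{(H)} holds. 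Note that this second step is unconditional, so $K\neq0$ is needed only for the first assertion, which is what guarantees that the rank-two balanced family is a smooth $2$-manifold on which \textbf{(H)} can even be tested.

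I expect the main obstacle to be the bookkeeping in $\tfrac12(\partial_b-\partial_d)P|_{x_0}$: the equation $P$ is a sum of six terms of the shape $m(\cdots)[\varphi(\cdot)-\varphi(\cdot)]$, and one must separately track the contributions where the polynomial prefactor is differentiated and those where $\varphi$ is, then collapse everything using $b_0=d_0$ and $a_0+f_0=4b_0$. The delicate point is that the many $\varphi'(b_0)$-contributions coming from the $m_2$-terms conspire to leave exactly the coefficient $-(m_1+m_2)a_0$, which is what turns the raw derivative into $K$.
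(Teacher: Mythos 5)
Your proposal is correct and follows essentially the same route as the paper: the paper obtains the first assertion by linearizing the pair (unique symmetric balanced equation, Cayley--Menger determinant) and reading off the rank of the resulting $2\times4$ Jacobian, whose first row is $(0,K,-K,0)$ precisely because all rhombi are balanced --- the same transversality that your factorization $\tilde P=tQ$ with $Q(x_0)=K$ expresses in coordinates adapted to the planar slice. The verification of \textbf{(H)} via the strict positivity of the Jacobian determinant of $(a,f)\mapsto \mathrm{spec}\,A|_{\mathrm{Im}\,B}$ is identical to the paper's computation.
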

\begin{proof}
One linearizes at $(a,b,d,f)=(a_0,b_0,b_0,f_0)$ the couple formed by the unique equation of $\Z/2\Z$-symmetric balanced configurations (see \ref{Sym}) and the Cayley-Menger determinant, proportional to the squared volume of the configuration; the first assertion of the lemma follows because the $2\times 4$ matrix one gets is of the form
 $$det\begin{pmatrix}
 0&K&-K&0\\
 ...&...&...&-2a_0f_0
 \end{pmatrix}.$$
Hence, if $K\not=0$, the set inertia matrices $B$ of balanced configurations close to $x_0$ is a 2 dimensional submanifold parametrized by $a$ and $f$ (or by the non-zero eigenvalues $\frac{1}{2}m_1a$ and $\frac{1}{2}m_2f$ of $B$). 
The mapping sending the non zero eigenvalues of the inertia matrix of a planar balanced configuration $B$ to the spectrum of $A|_{Im B}$ then reduces to
$$(a,f)\mapsto \left(2m_1\varphi(a)+2m_2\varphi\bigl(\frac{a+f}{4}\bigr), 2m_2\varphi(f)+2m_1\varphi\bigl(\frac{a+f}{4}\bigr)\right),$$
whose derivative at $B_0$ is always invertible because its determinant 
$$\bigl(m_1^2\varphi'(a_0)+m_2^2\varphi'(f)\bigr)\varphi'\bigl(\frac{a_0+f_0}{4}\bigr)+2m_1m_2\varphi'(a_0)\varphi'(f_0)$$
is strictly positive. Hence condition {\bf (H)} is satisfied (note that at this point we have not to suppose that $x_0$ is central).   
\end{proof}
\smallskip

\noindent Finally, the ellipsoid of inertia $B_0$ of a rhombus planar configuration is degenerate (i.e. round) if and only if $m_1a=m_2f$. Taking the Newtonian value $\varphi(s)=-\frac{1}{2}s^{-\frac{3}{2}}$ and supposing that $x_0$ is central, the condition of degeneracy becomes
$$m_1-m_1^{-\frac{3}{2}}m_2^{\frac{5}{2}}+8m_2^{\frac{3}{2}}(m_2-m_1)(m_1+m_2)^{-\frac{3}{2}}=0,$$ or, normalizing the masses by setting $m_2=1$,
$$m_1-m_1^{-\frac{3}{2}}+8(1-m_1)(1+m_1)^{-\frac{3}{2}}=0,$$
which defines 3 values $\gamma, 1, 1/\gamma$, with $\gamma\simeq 0.575$. Hence, 
\begin{lemma}
Except when $m_1/m_2$ equals $\gamma, 1/\gamma$ or $1$, the inertia ellipsoid of the planar rhombus central configuration
is not round. 
\end{lemma}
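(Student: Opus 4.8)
The plan is to convert the geometric statement into the problem of counting the positive roots of a single function of the mass ratio $m=m_1/m_2$, and then to show there are exactly three. Recall from the discussion preceding the statement that the two non-zero eigenvalues of the inertia endomorphism of the planar rhombus are $\tfrac{1}{2}m_1a$ and $\tfrac{1}{2}m_2f$, so the inertia ellipsoid is round precisely when $m_1a=m_2f$. Imposing simultaneously the rhombus constraint $b=(a+f)/4$, the central-configuration equation $m_1\varphi(a)-m_2\varphi(f)=(m_1-m_2)\varphi(b)$, the round condition $f=(m_1/m_2)a$ and the Newtonian value $\varphi(s)=-\tfrac{1}{2}s^{-3/2}$, one eliminates $a,b,f$ exactly as in the text. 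After dividing by the common positive factor $a^{-3/2}$ and normalising $m_2=1$, everything collapses to $g(m)=0$, where
$$g(m)=m-m^{-3/2}+8(1-m)(1+m)^{-3/2},\qquad m=m_1/m_2>0.$$
Thus it suffices to prove that $g$ has exactly the three positive zeros $\gamma,1,1/\gamma$.

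First I would record two structural features. The equal-mass case $m=1$ is always a zero, $g(1)=0$ (geometrically the square, for which $a=f$ together with $m_1=m_2$ forces $m_1a=m_2f$). Secondly, a short computation gives the reciprocal symmetry
$$g(1/m)=-\sqrt{m}\,g(m)\qquad(m>0),$$
which reflects the fact that swapping the two mass pairs (and $a\leftrightarrow f$) leaves the configuration unchanged; in particular the zero set of $g$ is invariant under $m\mapsto 1/m$. Hence it is enough to locate the zeros in $(0,1)$: each such zero $\gamma$ is automatically matched by $1/\gamma\in(1,\infty)$, and together with $m=1$ these exhaust the positive zeros.

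Next I would count the zeros on $(0,1)$. As $m\to 0^{+}$ the term $-m^{-3/2}$ dominates and $g(m)\to-\infty$, while $g(1)=0$ with $g'(1)=\tfrac{5}{2}-2\sqrt2<0$, so $g>0$ on a left neighbourhood of $1$; by the intermediate value theorem $g$ has at least one zero in $(0,1)$. To see there is exactly one, I would argue that $g$ is unimodal on $(0,1)$, strictly increasing and then strictly decreasing: starting from $-\infty$, rising to a positive maximum and returning to the value $0$ at the right endpoint $m=1$, such a function crosses zero only once (on its ascending branch). This pins down the unique zero $\gamma\approx0.575$, and with the symmetry yields precisely $\gamma,1,1/\gamma$; for every other ratio $g(m)\neq0$, i.e. $m_1a\neq m_2f$, so the ellipsoid is not round.

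The main obstacle is the uniqueness/unimodality step, because $g'$ is not sign-definite on $(0,1)$ (one finds $g'(0^{+})=+\infty$ and $g'(1)<0$, but $g''$ itself changes sign, so one cannot simply invoke monotonicity of $g'$). The cleanest way to make this rigorous is to clear the fractional powers: writing $t=\sqrt m$ and dividing out the factor corresponding to the root $m=1$, the equation becomes $(t^4+t^3+t^2+t+1)(1+t^2)^{3/2}=8t^3(t+1)$, with both sides strictly positive for $t>0$, so squaring introduces no spurious positive roots and produces a palindromic polynomial $\Pi(t)$ of degree $14$ with $t^{14}\Pi(1/t)=\Pi(t)$ and $\Pi(1)=-56\neq0$. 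The substitution $s=t+1/t\ge 2$ then reduces $\Pi$ to a polynomial of degree $7$ in $s$, and one checks (by a Sturm sequence, or by transporting the sign analysis above to the variable $s$) that it has a single root $s_0>2$; this root furnishes the reciprocal pair $\{\sqrt\gamma,1/\sqrt\gamma\}$, hence the two non-trivial mass ratios $\gamma,1/\gamma$, completing the count.
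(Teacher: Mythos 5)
Your reduction is exactly the paper's: the ellipsoid is round iff $m_1a=m_2f$, and combining this with the rhombus constraint $4b=a+f$ and the centrality condition $m_1\varphi(a)-m_2\varphi(f)=(m_1-m_2)\varphi\bigl(\frac{a+f}{4}\bigr)$ for the Newtonian $\varphi$ gives, after normalising $m_2=1$, precisely the equation $g(m)=0$ displayed in the text. Where you differ is that the paper stops there and simply asserts that this equation ``defines 3 values $\gamma,1,1/\gamma$'' (a claim evidently checked numerically), whereas you try to prove the count. The structure you add is correct and genuinely useful: $g(1)=0$ with $g'(1)=\frac{5}{2}-2\sqrt{2}<0$, the identity $g(1/m)=-\sqrt{m}\,g(m)$ (which does hold, and explains why the nontrivial roots come as a reciprocal pair --- something the paper uses only implicitly in writing the roots as $\gamma,1,1/\gamma$), and the sign change on $(0,1)$ giving existence of at least one root there. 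The one step you do not actually carry out is the decisive one: uniqueness of the root in $(0,1)$. The ``unimodality'' of $g$ is asserted rather than proved, and your cleaner polynomial route --- the palindromic degree-$14$ polynomial $\Pi(t)$ with $\Pi(1)=-56$, reduced by $s=t+1/t$ to degree $7$, all of which is set up correctly --- ends with ``one checks by a Sturm sequence'' without the check being performed. That computation is finite and mechanical, so the plan is sound, but as written the root count is still not established; in fairness, the paper establishes it no better. If you execute the Sturm sequence (equivalently, verify that the degree-$7$ polynomial in $s$ has exactly one root in $(2,\infty)$), your argument upgrades the paper's assertion to a complete proof.
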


\section*{Thanks} to Mickael Gastineau and Herv\'e Manche for their help, at an early stage of this work, to understand numerically the discriminant, which forced me to look for another way of taming the set of symmetric matrices with a double eigenvalue; to Jacques F\'ejoz for computations discussions and his continued interest in the fourth dimension; to Alain Albouy for his remarks, as pitiless as illuminating, to Hugo Jim\'enez-P\'erez and Lei Zhao for their careful reading; 
and last but not least,
to Jacques Laskar, to whom this paper is dedicated. As I already said, it is his discovery using TRIP, of the fact that, contrarily to my first guess, crossings are not avoided at the linear level near the regular tetrahedron, which gave me the impetus to prove the main result of this paper.

\end{document}